\documentclass[12pt, onesided, reqno]{amsart}

\usepackage{amssymb}
\usepackage{amsmath}
\usepackage{color}
\usepackage{enumerate}
\usepackage{graphicx}

\setlength{\oddsidemargin}{-0.0in}
\setlength{\textwidth}{6.5in}
\setlength{\topmargin}{-0.1in}
\setlength{\textheight}{8.4in}
\evensidemargin\oddsidemargin

\setcounter{page}{1}

\newtheorem{theorem}{Theorem}
\newtheorem{proposition}[theorem]{Proposition}
\newtheorem{corollary}[theorem]{Corollary}
\newtheorem{lemma}[theorem]{Lemma}
\newtheorem{assumption}[theorem]{Assumption}

\theoremstyle{definition}

\newtheorem{remark}[theorem]{Remark}

\newcommand{\eqnsection}{
\renewcommand{\theequation}{\thesection.\arabic{equation}}
    \makeatletter
    \csname  @addtoreset\endcsname{equation}{section}
    \makeatother}
\eqnsection

\def\e{\mathbf{e}}
\def\E{\mathbb{E}}

\def\N{\mathbb{N}}

\def\R{\mathbb{R}}

\def\Pb{\mathbb{P}}
\def\C{\mathbb{C}}

\def\F{\mathcal{F}}

\newcommand{\equi}{\mathop{\sim}\limits}
\def\={{\,\;\mathop{=}\limits^{\text{(law)}}\;\,}}

\def\qed{\hfill$\square$}

%\allowdisplaybreaks

\makeatletter
\@namedef{subjclassname@2020}{%
  \textup{2020} Mathematics Subject Classification}
\makeatother

\begin{document}

\title[]{Maximal displacement of spectrally negative\\ branching L\'evy processes}
\author[Christophe Profeta]{Christophe Profeta}

\address{
Universit\'e Paris-Saclay, CNRS, Univ Evry, Laboratoire de Math\'ematiques et Mod\'elisation d'Evry, 91037, Evry-Courcouronnes, France.
 {\em Email} : {\tt christophe.profeta@univ-evry.fr}
  }

\keywords{Branching  process ; Extreme values ; Spectrally negative L\'evy process}

\subjclass[2020]{}

\begin{abstract} 
We consider a branching Markov process in continuous time in which the particles evolve independently as spectrally negative L\'evy processes. When the branching mechanism is critical or subcritical, the process will eventually die and we may define its overall maximum, i.e. the maximum location ever reached by a particule. The purpose of this paper is to give asymptotic estimates for the survival function of this maximum. In particular, we show that in the critical case the asymptotics is polynomial when the underlying L\'evy process oscillates or drifts towards $+\infty$, and is exponential when it drifts towards $-\infty$.

\end{abstract}

\maketitle

\section{Introduction}

\subsection{Description of the model}

We consider a one-dimensional spectrally negative branching L\'evy process in the sense of Kyprianou \cite{Kyp1}. It is a continuous-time particle system in which individuals move according to independent spectrally negative L\'evy processes, and split at exponential times into a random number of children. \\

More precisely, an initial ancestor begins its existence at the origin at time $t=0$. It moves according to a spectrally negative L\'evy process $L$ up to an independent exponential random  variable $\e$ of parameter 1. It then dies and splits into a random number of children with distribution $\boldsymbol{p}=(p_n)_{n\geq0}$. Each of these children starts his life at the location of the ancestor and behaves independently of the others, following the same stochastic pattern as the ancestor : it moves according to $L$ and branches at rate 1.\\

We assume that the offspring distribution $\boldsymbol{p}$ is non trivial, has expectation smaller or equal to one  and admits moments of order at least 3 :
$$ p_1\neq1, \qquad  \qquad \E[\boldsymbol{p}]\leq 1,\qquad \qquad \E[\boldsymbol{p}^3]<+\infty.$$
As a consequence of the first two conditions, the branching process will almost surely die in finite time, and one may define its overall maximum $\bf{M}$, i.e. the maximum location ever reached by one particle. The purpose of this paper is to study the asymptotics of the survival function of 
$\bf{M}$.\\

The investigation of the maximal displacement of branching processes, or equivalently of their right-most particles, has already received a lot of attention in the literature. The emphasize has been put so far on the  supercritical branching Brownian motion for which it is known that the survival function of the right-most particle is a travelling wave solution of the F-KPP equation, see Bransom \cite{Bra}.\\

In our subcritical/critical set-up, this problem was first tackled by Fleischman \& Sawyer \cite{FlSa} in the case of the branching Brownian motion, as a model of propagation of an allele mutation in a population. In this case, one may write an ODE satisfied by the survival function of $\bf{M}$, and the result follows by standard analytic methods. A generalization to symmetric stable L\'evy processes was obtained by Lalley \& Shao \cite{LaSh1}, using a pseudo-differential equation and a Feynman-Kac representation of the solution.  More recently, the case of $\alpha$-stable L\'evy processes with positive jumps was solved in \cite{Pro}, using a different method based on integral equations. It was in particular proven that the asymptotics of the survival function of ${\bf M}$ is given as a power $-\alpha$ in the subcritical case, and $-\alpha/2$ in the critical case. We will see in the following theorems  that the situation is different for spectrally negative stable L\'evy processes.
 \\

It is finally noteworthy to point out that in the literature, the term "branching L\'evy process" might refer to a  construction more general than the one we just described. One may indeed encodes both the displacement of particules and the offspring reproduction into a general L\'evy measure : we refer for instance to Bertoin \& Mallein \cite{BeMa} or Mallein \& Shi \cite{MaSh} for a study of such processes.

\subsection{Statement of the results}

For $\lambda \in \C$ such that $\Re(\lambda)\geq0$,  let us define the Laplace exponent 
$\Psi(\lambda) = \ln \E\left[e^{\lambda L_1}\right] $ of the spectrally negative L\'evy process $L$ by

$$\Psi(\lambda) =a\lambda +  \frac{\eta^2}{2}\lambda^2 + \int_{-\infty}^0 \left(e^{\lambda x}- 1 - \lambda x 1_{\{|x|<1\}}\right) \nu(dx)$$
where $a \in \R$ is the drift coefficient, $\eta \in \R$ the Gaussian coefficient and the L\'evy measure $\nu$ satisfies $ \int_{-\infty}^0 (x^2\wedge 1)\, \nu(dx)<+\infty$. We exclude the case where $-L$ is a subordinator  (for which ${\bf M}=0$ a.s.). As a consequence the function $\Psi$ is strictly convex and tends to $+\infty$ as $\lambda\rightarrow +\infty$. This implies that for any $q\geq0$, the equation $\Psi(\lambda)=q$ admits at most two solutions, and we denote by $\Phi(q)$ the largest one :
$$\Phi(q) = \sup\{\lambda\geq 0,\, \Psi(\lambda)=q\}.$$
The function $\Phi$ is well-known to be related to the maximum of $L$. Indeed, let us denote by $S$ the running supremum of $L$ :
$$S_t = \sup_{s\leq t} L_s, \qquad t\geq0,$$
and let $\e$ be an exponential random variable with parameter 1 independent of $L$. Then, the random variable $S_\e$ is also exponentially 
distributed, see Bertoin \cite[Corollary VII.2]{Ber} : 
\begin{equation}\label{eq:S}
\Pb\left(S_\e \geq x\right) = e^{-\Phi(1) x}.
\end{equation}

\noindent
We start with the subcritical case.

\begin{theorem}\label{theo:subcrit}
Assume that the branching process is subcritical, i.e. $\E[\boldsymbol{p}]<1$. Then, there exists a finite constant $\kappa>0$ such that 
$$\Pb\left({\bf M}\geq x\right) \equi_{x\rightarrow +\infty}  \kappa e^{-\Phi(1-\E[\boldsymbol{p}])x}.$$

\end{theorem}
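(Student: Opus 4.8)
The plan is to convert the one-step (first-branching) recursion into a renewal equation and analyse it. With $v(x):=\Pb({\bf M}\ge x)$, the recursion ${\bf M}\elaw\max\!\big(S_\e,\ L_\e+\max_{1\le i\le N}{\bf M}_i\big)$ (where $N$ has law $\boldsymbol p$, the ${\bf M}_i$ are i.i.d.\ copies of ${\bf M}$, all independent), the identity \eqref{eq:S}, and the expansion of $g(s):=\E[s^N]$ at $s=1$ give, for $x>0$,
\begin{equation*}
v(x)=e^{-\Phi(1)x}+m\,\E\!\big[v(x-L_\e)\mathbf{1}_{\{S_\e<x\}}\big]-\E\!\big[G\big(v(x-L_\e)\big)\mathbf{1}_{\{S_\e<x\}}\big],
\end{equation*}
with $m=\E[\boldsymbol p]\in(0,1)$ (the case $m=0$ being trivial) and $G(y)=g(1-y)-1+my$, which satisfies $0\le G(y)\le\tfrac12 g''(1)y^2$ on $[0,1]$. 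Set $\theta:=\Phi(1-m)<\Phi(1)$. I would then expand the positivity-preserving operator $\mathcal{K}V(x):=m\,\E[V(x-L_\e)\mathbf{1}_{\{S_\e<x\}}]$ in a Neumann series: since $\mathcal{K}^kV(x)=m^k\,\E[V(x-L_{\gamma_k})\mathbf{1}_{\{S_{\gamma_k}<x\}}]$ with $\gamma_k\sim\mathrm{Gamma}(k,1)$, summing the $\mathrm{Gamma}(k,1)$ densities against $m^k$ produces an $\mathrm{Exp}(1-m)$-time, and a short computation using the spectrally negative first-passage identity $\E[e^{-qT^{+}_x}]=e^{-\Phi(q)x}$ ($T^{+}_x$ the first passage of $L$ above $x$, attained continuously) yields the key identity $\sum_{k\ge0}\mathcal{K}^k\big[e^{-\Phi(1)\,\cdot}\big](x)=e^{-\theta x}$. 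As $v=\sum_{k\ge0}\mathcal{K}^k z$ with $z(x)=e^{-\Phi(1)x}-h(x)$, $h(z):=\E[G(v(z-L_\e))\mathbf{1}_{\{S_\e<z\}}]\ge0$, and $\mathcal{K}$ is positive, this gives at once the a priori bound $v(x)\le e^{-\theta x}$ together with the exact representation
\begin{equation*}
v(x)=e^{-\theta x}-h(x)-m\int_0^\infty e^{-(1-m)t}\,\E\!\big[h(x-L_t)\mathbf{1}_{\{S_t<x\}}\big]\,dt .
\end{equation*}

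To take $x\to\infty$, I would first show, using only $v\le e^{-\theta\cdot}$, the bound on $G$, and the first-passage identity, that $\hat h(z):=e^{\theta z}h(z)$ is bounded and integrable on $(0,\infty)$ — with decay $e^{-2\theta z}$, $e^{-\Phi(1)z}$ or $z\,e^{-2\theta z}$ according to whether $\Psi(2\theta)<1$, $>1$ or $=1$ — and in particular $\hat h(x)\to0$. Next, performing the Esscher transform of parameter $\theta$ on $L$ — the resulting process is spectrally negative with Laplace exponent $\Psi(\cdot+\theta)-(1-m)$ and positive mean $\Psi'(\theta)$, hence drifts to $+\infty$; denote its law by $\mathbb{Q}$ — the displayed representation becomes
\begin{equation*}
v(x)\,e^{\theta x}=1-\hat h(x)-m\,\E^{\mathbb{Q}}\!\Big[\int_0^{T^{+}_x}\hat h(x-L_t)\,dt\Big],
\end{equation*}
$T^{+}_x$ now being the first passage of the $\mathbb{Q}$-process above $x$. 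The potential measure of that process killed at $T^{+}_x$ has a density expressible via the corresponding scale function $W_\theta$, which is bounded with $W_\theta(+\infty)=1/\Psi'(\theta)$; letting $x\to\infty$ by dominated convergence then gives
\begin{equation*}
\Pb({\bf M}\ge x)\ \equi_{x\rightarrow+\infty}\ \kappa\,e^{-\theta x},\qquad \kappa:=1-m\int_0^\infty\hat h(z)\,W_\theta(z)\,dz\ \in\ [0,1].
\end{equation*}

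It remains to prove $\kappa>0$, and I expect this to be the main obstacle: the representation only yields $\kappa\le1$, and since the nonlinear term carries the possibly large constant $g''(1)$, positivity cannot be read off it. My plan is to establish the matching lower bound $v(x)\ge c\,e^{-\theta x}$ probabilistically. Here $W_t:=\sum_{u\ \text{alive at}\ t}e^{\theta X_u(t)}$ is a nonnegative martingale with $\E[W_t]=1$ (many-to-one together with the single-particle martingale $e^{\theta L_t-(1-m)t}$). Changing measure along $W$ — compatibly with $\mathbb{Q}$ above, in that the spine moves as the $\mathbb{Q}$-tilted process and so drifts to $+\infty$ — forces the cloud to reach every level $x$ almost surely: writing $\tau_x$ for the first time a particle reaches height $x$ (a.s.\ finite under the tilt, so $\{{\bf M}\ge x\}=\{\tau_x<\infty\}$), one gets $\Pb({\bf M}\ge x)=\E^{\mathbb{Q}}[1/W_{\tau_x}]\ge 1/\E^{\mathbb{Q}}[W_{\tau_x}]$ by Jensen, while at time $\tau_x$ every particle is at height $\le x$, so $W_{\tau_x}\le e^{\theta x}Z_{\tau_x}$ with $Z_{\tau_x}$ the number of particles then alive. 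The crucial estimate is $\E^{\mathbb{Q}}[Z_{\tau_x}]\le C$ uniformly in $x$: under the spine measure the off-spine subtrees are ordinary \emph{subcritical} branching systems, so although order $x$ of them are created before $\tau_x$, in expectation only the $O(1)$ most recently born are still alive. This gives $v(x)\ge C^{-1}e^{-\theta x}$, hence $\kappa\ge C^{-1}>0$, and the theorem follows. Besides this uniform bound on $Z_{\tau_x}$ (which requires setting up the spine decomposition and quantifying the subcriticality of the off-spine subtrees), the remaining technical point is the identification of $\kappa$ in the second step through the scale-function form of the killed potential of the tilted process.
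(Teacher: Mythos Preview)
Your approach is correct and genuinely different from the paper's. The paper proceeds analytically: it first obtains a crude a-priori bound $u(x)\le Ce^{-\delta x}$ for some small $\delta>0$ (by integrating the recursion against $e^{\delta x}$ and choosing $\delta$ so that $\E[\boldsymbol{p}]\,\E[e^{\delta S_\e}]<1$), then takes the Laplace transform of the integral equation, recognises $(1-\E[\boldsymbol p]-\Psi(\lambda))^{-1}$ as the transform of the $(1-\E[\boldsymbol p])$-potential density $\theta^{(1-\E[\boldsymbol p])}(z)=\Phi'(1-\E[\boldsymbol p])e^{-\theta z}-W^{(1-\E[\boldsymbol p])}(-z)$, inverts, and reads off the limit from the explicit exponential part of $\theta^{(1-\E[\boldsymbol p])}$. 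Your Neumann-series/Esscher route is the probabilistic counterpart of the same resolvent identity (summing $m^k$ against $\mathrm{Gamma}(k,1)$ densities manufactures the $(1-m)$-resolvent, and $W_\theta(\cdot)=e^{-\theta\cdot}W^{(1-m)}(\cdot)$), but it buys you the sharp bound $v\le e^{-\theta x}$ in one stroke instead of the paper's weaker $Ce^{-\delta x}$, together with a transparent interpretation of $\kappa$ via the killed potential of the drifting tilted process. The paper's route is shorter in that it avoids the change of measure and the killed-potential formula altogether, working only with the free potential on $\R$.

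On $\kappa>0$: the paper does not justify this explicitly, so here you are in fact going further. Your spine plan is sound and the crucial estimate $\E^{\mathbb Q}[Z_{\tau_x}]\le C$ does hold: under the size-biased measure the off-spine subtrees are independent subcritical systems born at rate $1$ along the spine, each with mean size $e^{(m-1)(t-s)}$ at age $t-s$, so integrating over the Poisson birth times on $[0,\tau_x]$ gives the uniform bound $1+(\E[\boldsymbol p^2]/m-1)/(1-m)$. Two small points to tighten: the identity $\Pb(\tau_x<\infty)=\E^{\mathbb Q}[1/W_{\tau_x}]$ needs optional stopping for $W$ at $\tau_x$, which is justified by $\E[W_t;\tau_x>t]\le e^{\theta x}\E[Z_t]=e^{\theta x}e^{(m-1)t}\to0$; and the decay rates you list are those of $h$, not of $\hat h$ --- after multiplying by $e^{\theta z}$ they become $e^{-\theta z}$, $e^{-(\Phi(1)-\theta)z}$, $ze^{-\theta z}$, all still integrable, so the slip is harmless.
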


Comparing this asymptotics with (\ref{eq:S}), we see that in the subcritical case the branching mechanism reduces the decay of the survival function of $\bf{M}$, although it remains exponential. Of course, when there are no branching, i.e. $\E[\boldsymbol{p}]=0$, then ${\bf M} = S_\e$ a.s. and the equivalence is in fact an equality. \\

Before turning our attention to the critical case, we need to introduce the scale functions $W^{(q)}$, with $q\geq0$, which are defined on $[0,+\infty)$ via their Laplace transforms by 
\begin{equation}\label{eq:W}
\int_0^{+\infty} e^{-\beta x} W^{(q)} (x)dx = \frac{1}{\Psi(\beta)-q} \qquad \text{ for } \Re(\beta)>\Phi(q).
\end{equation}
These functions are increasing, a.e. differentiable, and known to be related with the exit time problem for spectrally negative L\'evy processes, see for instance Kuznetsov,  Kyprianou \& Rivero \cite{KKR} or Hubalek \& Kyprianou \cite{HuKy}. Analytically, the behavior at $+\infty$ of the scale function $W^{(0)}=W$ will explain the difference in the two cases of the following Theorem \ref{theo:crit}. Indeed, when $\Psi^\prime(0^+)<0$, the function $W^{(q)}$ has an exponential growth at $+\infty$ given for $q\geq0$ by 
\begin{equation}\label{eq:Wq}
W^{(q)}(x) \equi_{x\rightarrow +\infty} \frac{1}{\Psi^\prime(\Phi(q))} e^{\Phi(q) x}
\end{equation}
while in the case $\Psi^\prime(0^+)\geq 0$, this asymptotics is rather subexponential. To simplify the proof, we shall make the following assumption when $\Psi^\prime(0^+)\geq0$.
\begin{assumption}\label{assum}
The random variable $\max(-L_\e, 0)$ is integrable and the process $L$ is either
\begin{itemize}
\item of unbouded variation, 
\item or of bounded variation with a L\'evy measure having no atoms.
\end{itemize}
\end{assumption}

The first part of the assumption will ensure that the Fourier transforms we shall use in the proof are well-defined, while the second part implies that the scale function $W$ is of class $\mathcal{C}^1(0,+\infty)$, see \cite{HuKy}.

\begin{theorem}\label{theo:crit}
Assume that the branching process is critical, i.e. $\E[\boldsymbol{p}]=1$.
\begin{enumerate}
\item If $\Psi^\prime(0^+)> 0$ and Assumption \ref{assum} is satisfied, then :
$$\Pb\left({\bf M}\geq x\right) \equi_{x\rightarrow +\infty} \frac{2 \Psi^\prime(0^+) }{\sigma^2\, x} .$$
\item If $\Psi^\prime(0^+)= 0$  and Assumption \ref{assum} is satisfied, then there exist two positive constants $\kappa_1, \kappa_2$ such that for $x$ large enough :
$$\frac{\kappa_1}{xW(x)} \leq \Pb\left({\bf M}\geq x\right) \leq \frac{\kappa_2}{xW(x)}.$$
\item If $\Psi^\prime(0^+)<0$, then there exists a finite constant $\kappa>0$ such that :
$$\Pb\left({\bf M}\geq x\right) \equi_{x\rightarrow +\infty}  \kappa e^{-\Phi(0)x}.$$
\end{enumerate}

\end{theorem}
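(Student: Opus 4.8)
The plan is to reduce all three statements to the analysis of one nonlinear integral equation. Conditioning on the first branching time $\e$, either the ancestor's running supremum reaches level $x$ before time $\e$ — which by \eqref{eq:S} happens with probability $e^{-\Phi(1)x}$, and then ${\bf M}\ge x$ — or the ancestor dies at an interior position $L_\e<x$ and its children launch independent copies of the process from $L_\e$. Writing $f(s)=\sum_{n\ge0}p_n s^n$ for the offspring generating function (so that $f(1)=1$, $f'(1)=\E[\boldsymbol p]=1$ and $f''(1)=\sigma^2$, the variance of $\boldsymbol p$) and using the classical formula for the $1$-resolvent of a spectrally negative L\'evy process killed at its first passage above a level (see \cite{KKR}), this gives, with $v(x):=\Pb({\bf M}\ge x)$ and $\mathcal Kg(x):=\E\big[\mathbf 1_{\{S_\e<x\}}\,g(x-L_\e)\big]$ — whose kernel on $(0,\infty)$ is $e^{-\Phi(1)x}W^{(1)}(z)-W^{(1)}(z-x)$ —
$$v(x)=e^{-\Phi(1)x}+\mathcal Kv(x)-\frac{\sigma^2}{2}\,\mathcal K(v^2)(x)-\mathcal K\big(\rho(v)\big)(x),$$
where $\rho(u)=f(1-u)-1+u-\frac{\sigma^2}{2}u^2=O(u^3)$ near $0$ (licit since $\E[\boldsymbol p^3]<\infty$). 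The structural point that governs everything is that $\mathcal K\mathbf 1(x)=1-e^{-\Phi(1)x}$, so the constant $\mathbf 1$ already solves the affine equation $g=e^{-\Phi(1)\cdot}+\mathcal Kg$: the decay of $v$ is produced \emph{entirely} by the quadratic term, and one may rewrite the equation as $(I-\mathcal K)(\mathbf 1-v)=\phi$ with $\phi:=\frac{\sigma^2}{2}\mathcal K(v^2)+\mathcal K(\rho(v))$.

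For the cases $\Psi'(0^+)<0$ and $\Psi'(0^+)>0$ I would pass to the tilted function $\tilde v(x):=e^{\Phi(0)x}v(x)$ (which is just $v$ when $\Psi'(0^+)\ge0$, since then $\Phi(0)=0$). The tilted kernel $\tilde{\mathcal K}$ has total mass $\E[\mathbf 1_{\{S_\e<x\}}e^{\Phi(0)L_\e}]\to(1-\Psi(\Phi(0)))^{-1}=1$ and — under Assumption \ref{assum}, or automatically when $\Psi'(0^+)<0$ — mean step $-\Psi'(\Phi(0))<0$, so the associated random walk is transient and drifts towards the origin, where the killing $e^{-\Phi(1)\cdot}$ becomes strong. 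When $\Psi'(0^+)<0$ this is exactly the mechanism behind Theorem \ref{theo:subcrit} with the parameter $q=1-\E[\boldsymbol p]$ replaced by its critical value $0$: the tilted equation is a genuine renewal equation with directly Riemann integrable inhomogeneity, and one concludes $\tilde v(x)\to\kappa\in(0,\infty)$, i.e. $\Pb({\bf M}\ge x)\sim\kappa\,e^{-\Phi(0)x}$. When $\Psi'(0^+)>0$, after a preliminary bootstrap yielding $v(x)\to0$ with a crude polynomial bound, the renewal theorem applied to $(I-\mathcal K)(\mathbf 1-v)=\phi$ gives $1-v(x)=\tfrac{1}{\Psi'(0^+)}\int_0^x\phi+\text{(lower order)}$; self-consistency ($v\to0$) then forces $\int_0^\infty\phi=\Psi'(0^+)$, whence $v(x)\sim\tfrac{1}{\Psi'(0^+)}\int_x^\infty\phi(y)\,dy\sim\tfrac{\sigma^2}{2\Psi'(0^+)}\int_x^\infty v(y)^2\,dy$. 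Setting $V(x)=\int_x^\infty v^2$ this reads $V'=-v^2\sim-\big(\tfrac{\sigma^2}{2\Psi'(0^+)}\big)^2V^2$, hence $1/V(x)\sim\big(\tfrac{\sigma^2}{2\Psi'(0^+)}\big)^2 x$ and finally $\Pb({\bf M}\ge x)=v(x)\sim\tfrac{2\Psi'(0^+)}{\sigma^2\,x}$.

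The case $\Psi'(0^+)=0$ is the delicate one. The tilt is trivial, the step law of $\mathcal K$ is centred and the associated walk recurrent, while the killing $e^{-\Phi(1)\cdot}$ is too weak at infinity to regularise it; consequently there is no clean renewal limit, and the potential of $\mathcal K$ is instead controlled by the scale function $W=W^{(0)}$ — which is what forces the scale $1/(xW(x))$ and, $W$ being in general not regularly varying, prevents identification of a constant. Here I would not compute $v$ exactly, but prove the two matching bounds by a comparison principle for the (locally monotone) equation: using the $\mathcal C^1$-regularity of $W$ provided by the second part of Assumption \ref{assum}, one checks that $x\mapsto\kappa_2/(xW(x))$ is a supersolution and $x\mapsto\kappa_1/(xW(x))$ a subsolution for suitable $0<\kappa_1<\kappa_2$, and then, starting from a rough a priori bound, sandwiches $v$ between them for $x$ large.

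The main obstacle is, I expect, twofold. First comes the preliminary work required before any renewal or comparison argument can start: from the nonlinear equation one must extract that $v(x)\to0$ and obtain rough two-sided polynomial bounds of the right order — delicate precisely because the affine part of the equation is resonant, so the quadratic correction, though small, has to be tracked cumulatively. Second, the oscillating regime $\Psi'(0^+)=0$ itself: the recurrence of the underlying walk and the erratic growth of $W$ genuinely obstruct sharp asymptotics, which is why only a two-sided estimate is claimed there.
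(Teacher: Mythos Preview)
Your starting point and diagnosis are the same as the paper's: condition on the first branching, observe that the affine part $(I-\mathcal K)\mathbf 1=e^{-\Phi(1)\cdot}$ is resonant in the critical case, and conclude that the decay of $v$ is driven by the quadratic correction. But from here the paper takes a markedly different route. Rather than applying renewal theory to $(I-\mathcal K)(\mathbf 1-v)=\phi$, it \emph{inverts} the linear operator via Fourier transforms (Laplace transforms when $\Psi'(0^+)<0$), using the identity $i\xi/\Psi(i\xi)=\widehat{W'}$, to obtain a new nonlinear equation in closed form:
$$u(x)-\Delta(x)=\int_0^{+\infty}\Bigl(\tfrac{\sigma^2}{2}u^2(x+z)-R(x+z)-\Delta(x+z)\Bigr)W(z)\,dz.$$
All three cases are then read off this single equation. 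For $\Psi'(0^+)>0$ the paper differentiates it and solves the resulting Bernoulli-type relation --- essentially your ODE argument, but applied to the $W$-equation rather than to a second-order renewal remainder. For $\Psi'(0^+)<0$ the analogous Laplace inversion produces the $0$-potential density $\Phi'(0)e^{-\Phi(0)z}-W(-z)$, from which the exponential asymptotic follows by a direct limit. The a priori estimates needed beforehand (your ``preliminary bootstrap'') are obtained by a separate iteration argument (Lemma~\ref{lem:apriori+}).

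The genuine gap in your plan is Point~(2). You propose to exhibit $\kappa/(xW(x))$ as a super/subsolution and sandwich $v$ by a comparison principle, but for this backward quadratic equation the monotonicity runs the wrong way: if $T(w)(x)=\tfrac{\sigma^2}{2}\int_0^\infty w^2(x+z)W(z)\,dz$, then $w\ge T(w)$ forces $w$ to lie \emph{below} the fixed point, not above it, so a large supersolution does not exist and the sandwich cannot be initialised from a crude a priori bound. The paper does not use comparison at all here. For the upper bound it restricts the $W$-integral to $z\in[1,2]$, uses the concavity estimate $W(x)/W(3x)\ge 1/3$, and derives a self-referential inequality for $\sup_{[A,n]}\gamma$ with $\gamma(x)=xW(x)u(x)$ that caps the supremum. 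For the lower bound it differentiates the $W$-equation, bounds $-(u-\Delta)'/(u-\Delta)^{1+\delta}$ by an integrable expression in $W$ and $W'$ (again via concavity), and integrates. Neither step is a comparison principle, and both rely essentially on the explicit $W$-equation that your route does not produce.
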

\medskip

\noindent
To better understand Theorem \ref{theo:crit}, it might be useful to recall the following facts on the large time behavior of the underlying L\'evy process $L$, see Bertoin \cite[Corollary VII.2]{Ber}  :
\begin{enumerate}[$i)$]
\item When $\Psi^\prime(0^+)>0$, the process $L$ drifts a.s. towards $+\infty$. As a consequence, each particule tends to drift upward, and the branching mechanism allows to obtain a power decay instead of an exponential one. Note that the inequalities given in Point (2) remain valid in this case since $\lim\limits_{x\rightarrow +\infty}W(x)=1/\Psi^\prime(0^+)$.
\item When $\Psi^{\prime}(0^+)=0$, the process $L$ oscillates, i.e. $\limsup\limits_{t\rightarrow +\infty} L_t = - \liminf\limits_{t\rightarrow +\infty} L_t  =+\infty$. In this case, the fact that some particules may enjoy big deviations towards $+\infty$ also yields a power decay. This is typically the case of the branching Brownian motion.
\item Finally, when $\Psi^{\prime}(0^+)<0$, the process $L$ drifts a.s. towards $-\infty$. This case is then very similar to the subcritical case of Theorem \ref{theo:subcrit}, i.e. the branching mechanism only slightly reduces the tail of the asymptotics.\\
\end{enumerate}

\noindent
When $\Psi^{\prime}(0^+)=0$, as is usual, Theorem \ref{theo:crit} may be refined by assuming a specific asymptotics of $\Psi$ at 0.

\begin{corollary}\label{cor:1}
Assume that $\E[\boldsymbol{p}]=1$ and  that $\Psi$ is regularly varying at 0, i.e.  that there exists a constant  $\alpha\in(1,2]$ and a slowly varying function $\ell$ such that
\begin{equation}\label{eq:condPsi}
\Psi(\lambda) \equi_{\lambda \downarrow 0} \lambda^{\alpha} \ell\left(\frac{1}{\lambda}\right).
\end{equation}
Then,
$$\kappa_1 \frac{ \ell(x)}{x^\alpha} \leq \Pb\left({\bf M}\geq x\right) \leq \kappa_2\frac{\ell(x)}{x^\alpha}$$
and there exists a sequence $(x_n)$ such that  
$$\Pb\left({\bf M}\geq x_n\right) \equi_{x_n\rightarrow +\infty} \frac{2}{\sigma^2} \frac{\Gamma(2\alpha)}{\Gamma(\alpha)  }\ell(x_n) x_n^{-\alpha}.$$
\end{corollary}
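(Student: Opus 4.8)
The plan is to deduce Corollary~\ref{cor:1} from Theorem~\ref{theo:crit}(2) together with the Tauberian-type information that the regular variation hypothesis (\ref{eq:condPsi}) gives about the scale function $W$. First I would record the two-sided bound $\kappa_1/(xW(x))\le \Pb({\bf M}\ge x)\le \kappa_2/(xW(x))$ from Theorem~\ref{theo:crit}(2), valid since $\Psi^\prime(0^+)=0$ under (\ref{eq:condPsi}) with $\alpha\in(1,2]$. The whole game is then to translate the behaviour of $W$ at $+\infty$ into the behaviour of $\Psi$ at $0$. From the defining Laplace transform (\ref{eq:W}) with $q=0$, namely $\int_0^\infty e^{-\beta x}W(x)\,dx = 1/\Psi(\beta)$, the Karamata Tauberian theorem applies: since $\Psi(\beta)\sim \beta^\alpha\ell(1/\beta)$ as $\beta\downarrow0$, we get $1/\Psi(\beta)\sim \beta^{-\alpha}/\ell(1/\beta)$, hence $W(x)$ is regularly varying at $+\infty$ of index $\alpha-1$ and more precisely
\begin{equation*}
W(x) \equi_{x\rightarrow +\infty} \frac{1}{\Gamma(\alpha)}\,\frac{x^{\alpha-1}}{\ell(x)},
\end{equation*}
using that $\ell$ slowly varying forces $\ell(1/\beta)$ and $\ell(x)$ to match under the $\beta\leftrightarrow 1/x$ correspondence. (Here I use that $W\ge0$ is monotone, which is stated in the excerpt, so Karamata's monotone-density/Tauberian hypotheses are met.) Substituting into the two-sided bound immediately yields $\kappa_1\ell(x)x^{-\alpha}\le \Pb({\bf M}\ge x)\le \kappa_2 \ell(x)x^{-\alpha}$ after absorbing the constant $1/\Gamma(\alpha)$ and $x^{\alpha-1}/x = x^{-\alpha}$ into the (renamed) constants.

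For the sharp statement along a subsequence, the point is that Theorem~\ref{theo:crit}(2) is only a two-sided estimate, so one cannot hope for a genuine equivalence for all $x$; instead one extracts the limsup/liminf ratio. I would go back into the proof of Theorem~\ref{theo:crit}(2): the constants $\kappa_1,\kappa_2$ there arise from sandwiching the survival function between expressions of the form $c/(xW(x))$ times quantities that oscillate between two explicit values, and the sharpest achievable constant is $2/\sigma^2$ times the ``ideal'' renewal-type constant. Comparing with the exact asymptotic constant $2\Psi^\prime(0^+)/(\sigma^2 x)$ of Theorem~\ref{theo:crit}(1) — which is the $\alpha=1$, $\ell\equiv \Psi'(0^+)$ degenerate analogue — suggests the right normalisation in the regularly varying case is obtained by replacing $\Psi'(0^+)$, i.e. the value $1/W(\infty)$, by the ``effective'' value read off from $1/(xW(x)) \sim \ell(x)x^{-\alpha}\Gamma(\alpha)$, and then correcting by the Beta-integral factor coming from the convolution structure: $\int_0^1 t^{\alpha-1}(1-t)^{\alpha-1}dt = \Gamma(\alpha)^2/\Gamma(2\alpha)$. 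This is exactly where the factor $\Gamma(2\alpha)/\Gamma(\alpha)$ in the claimed constant $\frac{2}{\sigma^2}\frac{\Gamma(2\alpha)}{\Gamma(\alpha)}$ comes from — note $\frac{1}{\Gamma(\alpha)}\cdot\frac{\Gamma(2\alpha)}{\Gamma(\alpha)^2}\cdot\Gamma(\alpha) = \frac{\Gamma(2\alpha)}{\Gamma(\alpha)}$ once one tracks the $\Gamma(\alpha)$ from Karamata's theorem — so I would set up the integral equation from the proof of Theorem~\ref{theo:crit}, insert the regularly varying asymptotics, and identify the limit of $x^\alpha \Pb({\bf M}\ge x)/\ell(x)$ along a suitably chosen sequence $x_n\to\infty$.

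The subsequence, rather than full convergence, is needed because the map $\beta\mapsto W$ only controls an integrated/averaged version of $W$, and without a regularity hypothesis like monotonicity of $W/\text{(regularly varying)}$ or a second-order condition on $\ell$, the function $W(x)\Gamma(\alpha)\ell(x)x^{1-\alpha}$ need only have $\liminf\le 1\le\limsup$; one then picks $x_n$ realising the favourable direction. Concretely I expect the argument to run: (i) Karamata gives $\liminf_{x\to\infty} W(x)\Gamma(\alpha)\ell(x)x^{1-\alpha}\le 1 \le \limsup_{x\to\infty} W(x)\Gamma(\alpha)\ell(x)x^{1-\alpha}$ — actually by the smooth variation / monotonicity of $W$ one gets the cleaner statement that $\limsup$ equals $1$, but in any case there is a sequence along which the ratio tends to $1$; (ii) along that sequence the lower and upper bounds from the proof of Theorem~\ref{theo:crit}(2) coincide asymptotically and pin down the constant $\frac{2}{\sigma^2}\frac{\Gamma(2\alpha)}{\Gamma(\alpha)}$.

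The main obstacle I anticipate is step (ii): making the two-sided sandwich in the proof of Theorem~\ref{theo:crit}(2) collapse to a genuine equivalence along $(x_n)$. This requires a careful re-examination of exactly which estimates in that proof are tight and which are lossy — in particular one must check that the only source of the gap between $\kappa_1$ and $\kappa_2$ is the oscillation of $W$ relative to its regularly varying envelope (controlled along $(x_n)$ by (i)), and not some other inequality that stays strict. If, however, some auxiliary bound in the proof of Theorem~\ref{theo:crit}(2) is itself only order-correct (e.g. a crude bound on a remainder term in the integral equation), then one would need to sharpen that bound under the extra hypothesis (\ref{eq:condPsi}); the regular variation of $\Psi$ should make such remainders genuinely negligible, by dominated-convergence arguments applied to the Fourier/Laplace representations used there. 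A secondary, more routine point is justifying the Tauberian passage: one must confirm that $W$ and its increments satisfy the monotonicity or bounded-increase conditions under which Karamata's theorem upgrades an integrated asymptotic to a pointwise one, which is immediate from the stated monotonicity of $W^{(q)}$ and, if needed, the $\mathcal{C}^1$ regularity guaranteed by Assumption~\ref{assum}.
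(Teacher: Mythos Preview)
Your treatment of the two-sided bound is correct and matches the paper: Theorem~\ref{theo:crit}(2) plus Karamata's Tauberian theorem applied to (\ref{eq:W}) (with $W$ monotone) gives the genuine pointwise equivalence $W(x)\sim x^{\alpha-1}/(\Gamma(\alpha)\ell(x))$, and substitution yields the stated inequalities.

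Your plan for the sharp constant along a subsequence, however, has a real gap. First, you misidentify the source of oscillation: because $W$ is increasing, Karamata already gives a true asymptotic for $W$, not merely a liminf/limsup sandwich, so the ratio $W(x)\Gamma(\alpha)\ell(x)x^{1-\alpha}$ tends to $1$ along \emph{every} sequence. The oscillation, if any, is in $\gamma(x)=xW(x)u(x)$ itself. Second, and more seriously, your strategy of revisiting the proof of Theorem~\ref{theo:crit}(2) and waiting for its bounds to coincide cannot succeed: the inequalities used there (the crude estimate $W(x)/W(3x)\ge 1/3$, the restriction of the integration domain to $[1,2]$, etc.) are genuinely lossy and do not sharpen along any subsequence.

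The paper proceeds differently. It returns to the integral equation (\ref{eq:ux1z}) and rewrites it as
\[
\gamma(x)=\frac{\sigma^2}{2}\int_0^\infty \gamma^2\bigl(x(1+z)\bigr)\,b^{(x)}(z)\,dz + I(x),
\qquad b^{(x)}(z)=\frac{W(x)W(xz)}{\bigl((1+z)W(x(1+z))\bigr)^2},
\]
where $b^{(x)}(z)\to z^{\alpha-1}/(1+z)^{2\alpha}$ pointwise by the regular variation of $W$, and $I(x)\to 0$ by dominated convergence (using $b^{(x)}(z)\le (1+z)^{-2}$). Since $\gamma$ is bounded above and away from zero by Theorem~\ref{theo:crit}(2), Fatou and reverse Fatou applied to this nonlinear equation yield
\[
\liminf_{x\to\infty}\gamma(x)\;\le\;\frac{2}{\sigma^2 B(\alpha,\alpha)}\;\le\;\limsup_{x\to\infty}\gamma(x),
\]
with $B(\alpha,\alpha)=\int_0^\infty z^{\alpha-1}(1+z)^{-2\alpha}\,dz=\Gamma(\alpha)^2/\Gamma(2\alpha)$. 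The subsequence $(x_n)$ then comes for free from the continuity of $\gamma$. So the Beta factor does not arise from matching the $\alpha=1$ case of Theorem~\ref{theo:crit}(1), as you suggest, but directly as the limiting kernel in the equation for $\gamma$.
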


\begin{remark}
From Bertoin \cite[Proposition VII.6]{Ber}, Assumption (\ref{eq:condPsi}) is equivalent to the Spitzer's condition :
$$\lim_{t\rightarrow +\infty} \frac{1}{t}\int_0^t \Pb(L_s\geq 0) ds = \frac{1}{\alpha}.$$
The simplest example is of course the $\alpha$-stable spectrally negative L\'evy process for which one may choose $\Psi(\lambda) = \lambda^\alpha$, hence $\Phi(q) = q^{1/\alpha}$. Note that in this case, Assumption \ref{assum} is satisfied as $\alpha$-stable spectrally negative L\'evy processes are of unbounded variation, and admit moments of order 1.\\
\end{remark}

\subsection{An integral equation}

To prove Theorems \ref{theo:subcrit} and \ref{theo:crit}, we shall write down an integral equation which is similar to the stable case with positive jumps studied in \cite{Pro} or to the centered branching random walk case studied in Lalley \& Shao \cite{LaSh2}. Let us set 
$$u(x) = \Pb\left({\bf M}\geq x\right) \quad \text{for } x\geq0,$$
and $u(x)=0$ for $x<0$. The choice to take $u$ null on the negative half-line will allow to work with Fourier transforms without need of renormalization.

\begin{lemma}\label{lem:equ}
The function $u$ is a solution of the nonlinear integral equation :
\begin{multline}\label{eq:u}
u(x) = \E[\boldsymbol{p}]\E\left[1_{\{L_\e<x\}}u(x-L_\e)\right]- \frac{1}{2}\E\left[\boldsymbol{p}^2-\boldsymbol{p}\right]\E\left[1_{\{L_\e<x\}}u^2(x-L_\e)\right]\\
  +\E\left[1_{\{L_\e<x\}}R(x-L_\e)\right]+ \Delta(x)
\end{multline}
where the function $R$ is such that 
\begin{equation}\label{eq:omega3}
\forall z\in \R,\qquad 0\leq R(z) \leq \E[\boldsymbol{p}^3 ]u^3(z),
\end{equation}
and  the remainder $\Delta$ satisfies the bounds :
\begin{equation}\label{eq:phiR}
|\Delta(x)|  \leq \begin{cases}K  e^{-\Phi(1) x}  &\text{ if }x\geq0,\\
K\Pb(L_\e< x) & \text{ if }x<0
\end{cases}
\end{equation}
for some constant $K>0$.
\end{lemma}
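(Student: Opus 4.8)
The plan is to derive the integral equation by conditioning on the first branching event, which occurs at the independent exponential time $\e$ of parameter $1$. Up to time $\e$, the ancestor particle moves as the spectrally negative L\'evy process $L$, so its running supremum up to that time is $S_\e$, and at time $\e$ the particle sits at position $L_\e$, where it splits into $N$ children distributed according to $\boldsymbol{p}$. Each child then independently starts a fresh copy of the branching L\'evy process from $L_\e$. Writing $\mathbf{M}$ for the overall maximum, and $\mathbf{M}^{(i)}$ for the overall maxima of the subtrees rooted at the children (which, given $L_\e=y$ and $N=n$, are i.i.d. copies of $y+\mathbf{M}$), we get
\begin{equation*}
\{\mathbf{M}\geq x\} = \{S_\e \geq x\}\cup\Bigl(\{S_\e<x\}\cap\textstyle\bigcup_{i=1}^{N}\{\mathbf{M}^{(i)}\geq x\}\Bigr).
\end{equation*}
Taking probabilities and using inclusion-exclusion on the union over the $N$ children, the first term contributes $\Pb(S_\e\geq x)=e^{-\Phi(1)x}$ for $x\geq 0$ (by \eqref{eq:S}), which will be absorbed into $\Delta(x)$. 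On the complementary event $\{S_\e<x\}$ (which in particular forces $L_\e<x$), conditioning on $L_\e=y$ and $N=n$ and using independence of the subtrees, $\Pb(\bigcup_{i=1}^n\{\mathbf{M}^{(i)}\geq x\}\mid L_\e=y,N=n) = 1-(1-u(x-y))^n$.

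The next step is to expand $1-(1-v)^n = nv - \binom{n}{2}v^2 + \sum_{k\geq 3}(-1)^{k+1}\binom{n}{k}v^k$ with $v=u(x-y)\in[0,1]$, and average over $n\sim\boldsymbol{p}$. The linear term gives $\E[\boldsymbol{p}]\,u(x-y)$, the quadratic term gives $-\tfrac12\E[\boldsymbol{p}^2-\boldsymbol{p}]\,u^2(x-y)$, and I collect the remaining alternating tail into $R(x-y) := \sum_{k\geq 3}(-1)^{k+1}\E\bigl[\binom{\boldsymbol{p}}{k}\bigr]u^k(x-y)$. Since $0\le v\le 1$ and $1-(1-v)^n$ is nonnegative and bounded by the alternating series truncated at any point, one checks $0\le R(z)\le \E\bigl[\binom{\boldsymbol{p}}{3}\bigr]u^3(z)\le\tfrac16\E[\boldsymbol{p}^3]u^3(z)\le\E[\boldsymbol{p}^3]u^3(z)$, giving \eqref{eq:omega3}; this requires the third-moment hypothesis $\E[\boldsymbol{p}^3]<\infty$ to ensure $R$ is finite and the expansion is justified. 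Integrating these three pieces against the law of $L_\e$ on the event $\{L_\e<x\}$ produces exactly the first three terms of \eqref{eq:u}.

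Finally I must account for the discrepancy between the event $\{S_\e<x\}$ that genuinely appears and the event $\{L_\e<x\}$ that appears in \eqref{eq:u}: the difference $\{L_\e<x\}\setminus\{S_\e<x\} = \{S_\e\geq x>L_\e\}$ contributes correction terms, which together with the leading $\Pb(S_\e\geq x)$ term I lump into $\Delta(x)$. For $x\ge 0$ each such correction is bounded by a constant times $\Pb(S_\e\geq x)=e^{-\Phi(1)x}$, since on $\{S_\e\ge x\}$ the extra terms $u, u^2, R$ are all bounded by $1$ (and by a convergent geometric-type sum); for $x<0$ we have $u(x)=0$ so only the corrections survive, each controlled by $\Pb(L_\e<x)$ — this yields \eqref{eq:phiR}. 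The main obstacle I anticipate is the careful bookkeeping of the remainder $\Delta$: one has to argue that replacing $S_\e$ by $L_\e$ in every one of the (infinitely many) terms of the binomial expansion costs at most $O(e^{-\Phi(1)x})$, uniformly, which hinges on $\sum_k\E[\binom{\boldsymbol{p}}{k}]<\infty$ (guaranteed by $\E[\boldsymbol{p}^3]<\infty$ since $\binom{n}{k}\le n^3$ for... actually one needs $\E[2^{\boldsymbol{p}}]$ or a more delicate estimate — in fact $\sum_k\binom{n}{k}=2^n$, so the honest bound uses that on $\{S_\e\ge x\}$ the total mass $1-(1-u(x-L_\e))^N\le 1$ directly, sidestepping term-by-term control). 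The other mild technical point is justifying the conditioning via the branching property of Kyprianou's spectrally negative branching L\'evy process, and the measurability of $x\mapsto u(x)$, which follows since $u$ is monotone.
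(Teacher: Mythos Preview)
Your approach is correct and essentially identical to the paper's: both condition on the first branching time, expand $1-(1-u)^N$ to second order with a cubic remainder, then absorb the $\Pb(S_\e\ge x)$ term together with the $S_\e$-versus-$L_\e$ discrepancy into $\Delta$. The only difference is that the paper writes the remainder via the Taylor integral form $R(z)=u^3(z)\sum_{n\ge 3}p_n\tfrac{n(n-1)(n-2)}{6}\int_0^1(1-u(z)t)^{n-3}(1-t)^2\,dt$, which makes $0\le R\le\E[\boldsymbol{p}^3]u^3$ immediate and sidesteps your appeal to ``alternating series truncated at any point'' (which is not quite valid here since the terms $\binom{n}{k}v^k$ need not be monotone in $k$).
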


\begin{proof}
Let $x\in \R$ and recall that $\e$ is an exponential random variable  of parameter 1 independent of $L$. We start by applying the Markov property at the first branching event~:
\begin{align*}
\Pb({\bf M}< x) &= p_0 \Pb\left(S_\e< x\right) + \sum_{n=1}^{+\infty} p_n\, \Pb\left(S_\e<x,\; L_\e+{\bf M}^{(1)}<x,\ldots,  L_\e+{\bf M}^{(n)}<x \right)
\end{align*}
where the  random variables $({\bf M}^{(n)})_{n\in \N}$ are independent copies of ${\bf M}$, and are also independent of the pair $(L_\e, S_\e)$. Using the formula
$\Pb({\bf M}\geq x) = u(x) +  1_{\{x<0\}}$
 we thus obtain the integral equation~:
\begin{equation}\label{eq:u0}
1-u(x)-1_{\{x<0\}} = p_0 \Pb\left(S_\e< x\right)+ \sum_{n=1}^{+\infty} p_n \, \E\left[1_{\{S_\e<x\}}\; (1- u(x-L_\e)-1_{\{x<L_\e\}})^n\right].
\end{equation}
Note that developing the power $n$ on the right-hand side, one may remove the indicator $1_{\{x<L_\e\}}$ since by definition $\{S_\e < x < L_\e\}=\emptyset$. 
Plugging the Taylor expansion with integral remainder 
$$(1-u)^n = 1 - n u + \frac{n(n-1)}{2} u^2 -  \frac{n(n-1)(n-2)}{6} u^3  \int_0^1 (1-u t)^{n-3} (1-t)^2 dt$$
in (\ref{eq:u0}), we deduce after some simplifications that 
\begin{multline}\label{eq:sansR}
u(x) =\Pb\left(S_\e\geq x\right) - 1_{\{x<0\}}
+  \E[\boldsymbol{p}]\E\left[1_{\{S_\e<x\}}u(x-L_\e)\right]\\-\frac{1}{2}\E\left[\boldsymbol{p}^2-\boldsymbol{p}\right]\E\left[1_{\{S_\e<x\}}u^2(x-L_\e)\right]+ \E\left[1_{\{S_\e<x\}}R(x-L_\e)\right]
\end{multline}
where the function $R$ equals :
$$R(z) = u^3(z) \sum_{n\geq 3}p_n   \frac{n(n-1)(n-2)}{6} \int_0^1   (1-u(z) t)^{n-3} (1-t)^2 dt  \,\leq\, \E[\boldsymbol{p}^3] u^3(z).$$
Looking at (\ref{eq:sansR}) and adding and substracting $1_{\{L_\e<x\}}$, one now obtains formula (\ref{eq:u}) with $\Delta$ given by 
\begin{multline*}
\Delta(x) = \Pb\left(S_\e\geq x\right) - 1_{\{x<0\}} \\+ \E\left[\left(1_{\{S_\e<x\}} - 1_{\{L_\e<x\}}\right)\left( \E[\boldsymbol{p}]u(x-L_\e)- \frac{1}{2}\E\left[\boldsymbol{p}^2-\boldsymbol{p}\right]u^2(x-L_\e)+R(x-L_\e)\right)\right].
\end{multline*}
Finally, for $x<0$, we have using that $S_\e\geq0$ a.s. and $u(z)\leq 1$ :
$$
|\Delta(x)|  \leq \E\left[1_{\{L_\e<x\}}\left( \E[\boldsymbol{p}]+\E\left[\boldsymbol{p}^2\right]+\E[\boldsymbol{p}^3]\right)\right] \leq K \Pb(L_\e<x)
$$
while for $x>0$, using the explicit distribution of $S_\e$ given by (\ref{eq:S}) :
$$
|\Delta(x)| \leq \Pb(S_\e\geq x) +  2 \E\left[1_{\{S_\e>x\}}\left( \E[\boldsymbol{p}]+\E\left[\boldsymbol{p}^2\right]+\E[\boldsymbol{p}^3]\right)\right]\\
\leq K  e^{-\Phi(1) x}.
$$
\end{proof}

\noindent
Starting from Lemma \ref{lem:equ}, the proofs of Theorems \ref{theo:subcrit} and \ref{theo:crit} both rely on the same three steps :
\begin{enumerate}[$i)$]
\item We  first obtain some a priori estimates on $u$ using Equation (\ref{eq:u}).
\item We then use these estimates to write down a new  integral equation satisfied by $u$.
\item We finally compute the asymptotics of $u$ using this new equation.
\end{enumerate}
One of the key observation will be to notice that the three expectations on the right-hand side of Equation (\ref{eq:u}) are in fact convolution products. This will lead us to work with Laplace and Fourier transforms.

\bigskip

\section{The subcritical case : proof of Theorem \ref{theo:subcrit}}\label{sec:2}

We start with the subcritical case and first prove that the asymptotics of $u$ must be at least exponential. In the following, we shall exclude the case $\E[\boldsymbol{p}]=0$ for which ${\bf M}= S_\e$ a.s.

\begin{lemma}\label{lem:p<1}
Assume that $\E[\boldsymbol{p}]<1$. There exists two constants $C, \delta>0$ such that 
$$\forall x\geq0,\qquad u(x) \leq C e^{-\delta x}.$$
\end{lemma}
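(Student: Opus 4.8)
The plan is to bootstrap from the crude bound $u\le 1$ using the integral equation \eqref{eq:u} together with the exponential decay of $\Delta$ recorded in \eqref{eq:phiR}. The key structural fact is that, since $\E[\boldsymbol{p}]<1$, the leading linear term $\E[\boldsymbol{p}]\,\E[1_{\{L_\e<x\}}u(x-L_\e)]$ is a strict contraction in a suitable weighted sup-norm, so the linear renewal-type equation it generates has exponentially decaying solutions; the quadratic and cubic terms in $u$ only help (they are nonnegative and bounded by multiples of $u^2$ and $u^3$), and the remainder $\Delta$ supplies an exponentially small forcing term.

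Concretely, here is the order in which I would carry out the argument. First I would fix a small $\delta>0$ to be chosen later, with $\delta<\Phi(1)$, and multiply \eqref{eq:u} by $e^{\delta x}$. Set $v(x)=e^{\delta x}u(x)$ and $v^*=\sup_{x\ge 0}v(x)$; a priori $v^*$ could be $+\infty$, so I would first work on a truncated interval $[0,N]$, bound $v_N^*:=\sup_{0\le x\le N}v(x)$, and then let $N\to\infty$. The linear term contributes
$$ e^{\delta x}\,\E[\boldsymbol{p}]\,\E\!\left[1_{\{0\le L_\e<x\}}u(x-L_\e)\right] \le \E[\boldsymbol{p}]\,\E\!\left[e^{\delta L_\e}\right] v_N^*, $$
using $e^{\delta x}u(x-L_\e)=e^{\delta L_\e}v(x-L_\e)$ and $v(x-L_\e)\le v_N^*$ (here I also need $u$ vanishing on the negatives so only $L_\e<x$ contributes). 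The quadratic and cubic terms are bounded similarly: since $u\le 1$ one has $u^2(x-L_\e)\le u(x-L_\e)$ and $u^3\le u$, so they are each $\le (\text{const})\,\E[e^{\delta L_\e}]\,v_N^*$ — but I would rather keep them as $\le \E[e^{\delta L_\e}]\, v_N^*\cdot \sup u$ and exploit that, by the first (to-be-proven) rough decay, $\sup_{x\ge x_0}u(x)$ is small; alternatively, and more cleanly, I would first prove $u(x)\to 0$ (which follows because the branching process dies a.s., so ${\bf M}<\infty$ a.s.), pick $x_0$ with $u\le\varepsilon$ on $[x_0,\infty)$, and run the contraction only on that tail. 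Finally the forcing term is bounded by $e^{\delta x}|\Delta(x)|\le K e^{(\delta-\Phi(1))x}\le K$ for $x\ge0$.

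The crux is the choice of $\delta$. One needs $\E[\boldsymbol{p}]\,\E[e^{\delta L_\e}]<1$. At $\delta=0$ this equals $\E[\boldsymbol{p}]<1$, and $\delta\mapsto \E[e^{\delta L_\e}]$ is continuous (indeed finite for $\delta$ in a right-neighbourhood of $0$, since $L$ is spectrally negative so $\E[e^{\delta L_1}]=e^{\Psi(\delta)}<\infty$ for all $\delta\ge0$, hence $\E[e^{\delta L_\e}]=\int_0^\infty e^{-t}e^{t\Psi(\delta)}\,dt=(1-\Psi(\delta))^{-1}$ whenever $\Psi(\delta)<1$, i.e. for $0\le\delta<\Phi(1)$). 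Thus one may pick $\delta\in(0,\Phi(1))$ small enough that $q:=\E[\boldsymbol{p}](1-\Psi(\delta))^{-1}<1$; then on the tail $[x_0,\infty)$, after absorbing the quadratic/cubic contributions into a factor $(1+o(1))$ as $\varepsilon\to0$ one gets $v_N^*\le q'\,v_N^*+K'$ with $q'<1$ uniformly in $N$, whence $v_N^*\le K'/(1-q')$; letting $N\to\infty$ gives $v^*<\infty$ on $[x_0,\infty)$, i.e. $u(x)\le C e^{-\delta x}$ there, and on the compact $[0,x_0]$ the bound $u\le 1$ trivially gives the same with a larger constant. The main obstacle is the bookkeeping needed to make the a priori finiteness of $v_N^*$ legitimate and to handle the quadratic term without circularity — this is exactly why one first establishes $u(x)\to0$ and localizes to a far tail before invoking the contraction.
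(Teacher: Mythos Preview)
Your strategy --- drop the negative quadratic term, run a weighted sup-norm contraction on $v(x)=e^{\delta x}u(x)$, and use $\E[\boldsymbol{p}]<1$ to make the linear term a strict contraction --- is sound and is genuinely different from the paper's argument. The paper instead works in $L^1$: it keeps the quadratic term, bounds it from below by $\tfrac{\sigma^2}{2}\Pb(L_\e>0)u^2(x)$ using monotonicity, replaces $L_\e$ by $S_\e$ in the linear and cubic convolutions (via $u(x-L_\e)\le u(x-S_\e)$), integrates against $e^{\delta x}$ on $[0,n]$, and chooses $\delta$ so that $\E[\boldsymbol{p}]\E[e^{\delta S_\e}]<1$; this yields $\int_0^\infty e^{\delta x}u^2(x)\,dx<\infty$, and the pointwise bound then follows from monotonicity of $u$. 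Your route is more direct; the paper's gives the integrability of $e^{\delta x}u^2$ as a byproduct.

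There is, however, a real gap in your truncation step. The linear term in \eqref{eq:u} is $\E[\boldsymbol{p}]\,\E\bigl[1_{\{L_\e<x\}}u(x-L_\e)\bigr]$, not $\E\bigl[1_{\{0\le L_\e<x\}}\cdots\bigr]$: for a spectrally negative L\'evy process the event $\{L_\e<0\}$ has positive probability, and on it $x-L_\e>x$ may well exceed $N$, so the inequality $v(x-L_\e)\le v_N^\ast$ fails precisely there. This is exactly the place where the ``bookkeeping'' you allude to breaks down. The fix is short once you exploit the monotonicity of $u$: for $L_\e<0$ one has $u(x-L_\e)\le u(x)$, hence
\[
e^{\delta L_\e}v(x-L_\e)=e^{\delta x}u(x-L_\e)\le e^{\delta x}u(x)=v(x)\le v_N^\ast,
\]
and together with the $L_\e\ge 0$ part this gives $e^{\delta L_\e}v(x-L_\e)1_{\{L_\e<x\}}\le e^{\delta L_\e^+}v_N^\ast\le e^{\delta S_\e}v_N^\ast$. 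Thus the correct contraction factor is $\E[\boldsymbol{p}]\,\E[e^{\delta S_\e}]=\E[\boldsymbol{p}]\,\Phi(1)/(\Phi(1)-\delta)$, which is $<1$ for small $\delta>0$ --- exactly the constant the paper uses. With this correction (and the analogous splitting for the cubic term near $x-L_\e<x_0$, which you already anticipate) your argument goes through.
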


\begin{proof} Using that $u$ is decreasing and bounded by $1$, we first write
\begin{equation}\label{eq:borneu2}
\Pb(L_\e>0)u^2(x)\leq \E\left[1_{\{L_\e>0\}}u^2(x-L_\e)\right] \leq  \E\left[1_{\{S_\e<x\}}u^2(x-L_\e)\right] + \Pb(S_\e\geq x).
\end{equation}
Of course, since $L$ is not the opposite of a subordinator, we necessarily have $\Pb(L_\e>0)>0$.
Going back to Equation (\ref{eq:sansR}), and using the bound on $R$ given in (\ref{eq:omega3}) as well as the obvious inequality $L_\e \leq S_\e$ a.s., we obtain for $x\geq0$ :
\begin{multline}\label{eq:startbound}
 \frac{1}{2}\E\left[\boldsymbol{p}^2-\boldsymbol{p}\right]\Pb(L_\e>0)u^2(x)\leq\E[\boldsymbol{p}]  \E\left[1_{\{S_\e<x\}}u(x-S_\e)\right]
 -u(x) \\+   \E[\boldsymbol{p}^3] \E\left[1_{\{S_\e<x\}}u^3(x-S_\e)\right] +\left(1+ \E\left[\boldsymbol{p}^2\right]\right)\Pb(S_\e\geq x).
\end{multline}
Notice that we have implicitly used the fact that $\E\left[\boldsymbol{p}^2-\boldsymbol{p}\right]>0$ since $\boldsymbol{p}$ is integer-valued. We now 
integrate this relation on $[0,n]$ with $n\in \N$ and $\delta>0$ :
\begin{align*}
&\int_0^n e^{\delta x}\left( \frac{1}{2}\E\left[\boldsymbol{p}^2-\boldsymbol{p}\right]\Pb(L_\e>0)u^2(x) -  \E[\boldsymbol{p}^3]  \E\left[1_{\{S_\e<x\}}u^3(x-S_\e)\right]\right) dx\\
&\qquad \leq  \E[\boldsymbol{p}]  \E\left[ e^{\delta S_\e}  \int_0^{(n-S_\e)^+}e^{\delta x} u(x)dx \right] - \int_0^n e^{\delta x} u(x)dx+ \left(1+\E\left[\boldsymbol{p}^2\right]\right)\int_0^n e^{\delta x}\Pb(S_\e\geq x) dx\\
&\qquad \leq  \left(\E[\boldsymbol{p}]  \E\left[ e^{\delta S_\e} \right]-1\right) \int_0^n e^{\delta x} u(x)dx+ \left(1+ \E\left[\boldsymbol{p}^2\right]\right)\int_0^n e^{\delta x}\Pb(S_\e\geq x) dx.
\end{align*}
Since $\E[\boldsymbol{p}]<1$, we may choose $\delta \in (0, \Phi(1))$ small enough such that $\E[\boldsymbol{p}]  \E\left[ e^{\delta S_\e} \right]<1$. Then, letting $n\rightarrow +\infty$, we obtain :
$$
\limsup_{n\rightarrow +\infty} \int_0^n e^{\delta x}\left( \frac{1}{2}\E\left[\boldsymbol{p}^2-\boldsymbol{p}\right]\Pb(L_\e>0)u^2(x) -  \E[\boldsymbol{p}^3]\E[e^{\delta S_\e}]u^3(x)\right) dx\leq \frac{1+\E[\boldsymbol{p}^2]}{\delta}\E[e^{\delta S_\e}].
$$
Since $ u$ decreases  to $0$ as $x\rightarrow +\infty$, we deduce that
$$\int_0^{+\infty} e^{\delta x}u^2(x)dx <+\infty$$
and finally, for $x\geq 1$,
\begin{equation}\label{eq:minu2}
\int_0^{+\infty} e^{\delta z}u^2(z)dz \geq \int_{x-1}^{x}e^{\delta z}u^2(z)dz \geq e^{\delta (x-1)} u^2(x)
\end{equation}
which implies the result.
\end{proof}

\bigskip

Thanks to Lemma \ref{lem:p<1}, we may now take the Laplace transform of Formula (\ref{eq:u}) with $\lambda \in (0,\delta)$. To simplify the computation, we set 
$$g(x) =    R(x)-\frac{1}{2}\E\left[\boldsymbol{p}^2-\boldsymbol{p}\right] u^2(x).$$
Using the Fubini theorem to compute the convolution products on the right-hand side of  (\ref{eq:u}), we obtain :
$$
 \int_\R e^{\lambda x} u(x) dx =  \E[\boldsymbol{p}] \E\left[e^{\lambda L_\e}\right] \int_\R e^{\lambda x} u(x) dx + 
 \E\left[e^{\lambda L_\e}\right] \int_\R e^{\lambda x} g(x) dx  +  \int_\R e^{\lambda x} \Delta(x) dx.
$$
By definition, the Laplace transform of the random variable $L_\e$ admits the expression
$$\E[e^{\lambda L_\e}] = \int_0^{+\infty}  e^{-t}  e^{t \Psi(\lambda )}dt =  \frac{1}{1-\Psi(\lambda)}$$
which yields the formula
\begin{equation}\label{eq:usubcrit}
\int_\R e^{\lambda x} u(x) dx  = \frac{ 1  }{1-\E[\boldsymbol{p}] - \Psi(\lambda)} \int_\R e^{\lambda x} (g(x)+\E[\boldsymbol{p}]\Delta(x)) dx  + \int_\R e^{\lambda x} \Delta(x) dx.  
\end{equation}
Observe that this expression remains in fact valid for $0\leq \lambda < \Phi(1-\E[\boldsymbol{p}])$.
We now prove that the fraction on the right-hand side may be written as a Laplace transform. Indeed 
\begin{align*}\frac{1}{1-\E[\boldsymbol{p}] - \Psi(\lambda)}  &= \int_0^{+\infty}  e^{-(1-\E[\boldsymbol{p}] - \Psi(\lambda)) t} dt \\
&=\int_0^{+\infty}  e^{-(1-\E[\boldsymbol{p}])t} \E\left[e^{\lambda L_t} \right]dt = \int_\R e^{\lambda z} \int_0^{+\infty} e^{-(1-\E[\boldsymbol{p}] )t} \Pb(L_t\in dz)dt.
\end{align*}
From Kyprianou \cite[Corollary 8.9]{Kyp}, the $q$-potential measure of $L$ is known to be absolutely continuous with density $\theta^{(q)}$ given for $q>0$ by :
\begin{equation}\label{eq:Kyp}
\int_0^{+\infty} e^{-qt}  \Pb(L_t\in dz)dt  =  \theta^{(q)}(z) dz =  \left( \Phi^\prime(q )e^{-\Phi(q)z} - W^{(q )}(-z)\right) dz.
\end{equation}
As a consequence, inverting Equation (\ref{eq:usubcrit}), we obtain a new integral equation for $u$ :
\begin{equation} \label{eq:u=g}
u(x) = \int_\R \left(g(z) +  \E[\boldsymbol{p}]\Delta(z)\right) \theta^{(1-\E[\boldsymbol{p}])}(x-z)\ dz + \Delta(x).
\end{equation}
To study the limit of $u$, observe first that from Formula (\ref{eq:omega3}), the function $g$ is ultimately negative. As a consequence, let us take $A$ large enough such that $g(x)\leq 0$ for $x\geq A$. Using the Definition (\ref{eq:Kyp}) of $\theta^{(q)}$, we have the upper bound
$$
u(x) %\leq \int_0^Ag(z)  \theta^{(1-\E[\boldsymbol{p}])}(x-z) dz +\E[\boldsymbol{p}]\int_\R  \Delta(z)  \theta^{(1-\E[\boldsymbol{p}])}(x-z)\ dz +  \Delta(x)\\
\leq  \Phi^\prime(1-\E[\boldsymbol{p}] )  \int_0^Ag(z)  e^{\Phi(1-\E[\boldsymbol{p}] )(z-x)} dz +\E[\boldsymbol{p}]\int_\R  \Delta(z)  \theta^{(1-\E[\boldsymbol{p}])}(x-z)\ dz +  \Delta(x)
$$
where we have used that $g(x)$ and $W^{(q)}(x)$ are null for $x<0$.
Furthermore,  the second integral may be decomposed into
\begin{align*}
\int_\R  \Delta(z)  \theta^{(1-\E[\boldsymbol{p}])}(x-z)\ dz &= \Phi^\prime(1-\E[\boldsymbol{p}] )   \int_\R  \Delta(z)  e^{\Phi(1-\E[\boldsymbol{p}] )(x-z)} dz  - 
\int_x^{+\infty}   \Delta(z)  W^{(q )}(z-x) dx
\end{align*}
and, thanks to the bound (\ref{eq:phiR}) on $\Delta$ and the asymptotics (\ref{eq:Wq}),  
$$\int_x^{+\infty}   |\Delta(z)|  W^{(q )}(z-x) dx \leq \int_0^{+\infty}   |\Delta(z+x)|  W^{(q )}(z) dz \leq K    \int_0^{+\infty} e^{-\Phi(1) (z+x)} W^{(q)}(z) dz <+\infty.$$
As a consequence, taking the limit superior, we have proven that for $A$ large enough :
$$\limsup_{x\rightarrow +\infty} e^{\Phi(1-\E[\boldsymbol{p}] )x}u(x)  \leq  \Phi^\prime(1-\E[\boldsymbol{p}] )\int_\R  e^{\Phi(1-\E[\boldsymbol{p}] )z}  \left(g(z) 1_{\{0\leq z\leq A\}} +  \E[\boldsymbol{p}]\Delta(z)\right) dz.$$
The lower bound may be obtained similarly by observing that since $W^{(q)}$ is positive,
$$\int_A^{+\infty} g(z)  \theta^{(1-\E[\boldsymbol{p}])}(x-z)\ dz \geq \Phi^\prime(1-\E[\boldsymbol{p}] )   \int_A^{+\infty}g(z)  e^{\Phi(1-\E[\boldsymbol{p}] )(z-x)} dz$$
and is this time independent of $A$ :
$$\liminf_{x\rightarrow +\infty} e^{\Phi(1-\E[\boldsymbol{p}] )x}u(x)  \geq  \Phi^\prime(1-\E[\boldsymbol{p}] )\int_\R  e^{\Phi(1-\E[\boldsymbol{p}] )z}  \left(g(z) +  \E[\boldsymbol{p}]\Delta(z)\right) dz = \kappa.$$
Theorem \ref{theo:subcrit} now follows by letting $A\uparrow +\infty$.
%
%
%Finally letting $x\rightarrow +\infty$ and using the bound (\ref{eq:phiR}) on $\Delta$ as well as the asymptotics (\ref{eq:Wq}) of $W^{(q)}$ gives the result :
%$$\lim_{x\rightarrow +\infty} e^{\Phi(1-\E[\boldsymbol{p}] )x}u(x) =  \Phi^\prime(1-\E[\boldsymbol{p}] )\int_\R  e^{\Phi(1-\E[\boldsymbol{p}] )z}  \left(g(z) +  \E[\boldsymbol{p}]\Delta(z)\right) dz = \kappa.$$
It seems however difficult to compute explicitly $\kappa$ as it is given in terms of $u$.
\qed

\medskip

\section{The Critical case : Proof of Theorem \ref{theo:crit} when $\Psi^\prime(0^+)\geq0$}

We now assume that $\E[\boldsymbol{p}]=1$ and we recall that in this case $\sigma^2=\E[\boldsymbol{p}^2-\boldsymbol{p}]$ denotes the variance of $\boldsymbol{p}$. For the first part of the proof, we shall deal with Point (1) and Point (2) of Theorem \ref{theo:crit} simultaneously.

\begin{remark}\label{rem:subcrit}
Notice that by a coupling argument with the subcritical case obtained in Theorem \ref{theo:subcrit}, we deduce that for any $\delta>0$, 
\begin{equation}\label{eq:edu}
\liminf_{x\rightarrow +\infty} e^{\delta x}u(x) =+\infty.
\end{equation}
Indeed, starting from a critical spectrally negative branching process $X$ defined on a probability space $\Omega$, consider a second spectrally negative branching process $X^\ast$ defined on the same space $\Omega$ and where only the offspring distribution is modified as follows.
Fix $n_0\geq 1$ such that $p_{n_0}>0$ and let $\varepsilon>0$. Then define the new offspring distribution $\boldsymbol{p}^\ast$ by :
$$\forall \omega\in \Omega,\qquad \begin{cases}
\text{if } \boldsymbol{p}( \omega)  \neq n_0, \quad \text{then }  \boldsymbol{p}^\ast( \omega) = \boldsymbol{p}( \omega)\\
\text{if } \boldsymbol{p}( \omega)  = n_0,\quad \text{then }  \boldsymbol{p}^\ast( \omega) =  n_0 1_{\{\boldsymbol{U}( \omega)\leq 1-\varepsilon\}}  \\
\end{cases}
$$
where $\boldsymbol{U}$ is a uniform random variable on $[0,1]$ independent from $X$. In other words, if a particule has $n_0$ children, we remove, with probability $\varepsilon$, the paths of all these children. As a consequence, $X^\ast$ is a subcritical spectrally negative branching process since $\E\left[\boldsymbol{p}^\ast\right] = 1 - n_0 p_{n_0} \varepsilon <1$.
By coupling, we deduce with obvious notation that $u(x) \geq u^\ast(x)$, i.e., thanks to Theorem \ref{theo:subcrit}, 
$$\liminf_{x\rightarrow +\infty} e^{\Phi(1-\E[\boldsymbol{p}^\ast]) x}u(x) > \kappa^\ast>0.$$
This implies (\ref{eq:edu}) since $\Phi(0)=0$ in the case $\Psi^\prime(0^+)\geq0$.
\end{remark}

\subsection{A priori estimates}
As in the subcritical case, we start by proving some a priori estimates on  the function $u$. These estimates will be necessary to justify some of the computation later.
\begin{lemma}\label{lem:apriori+}
For any $\delta>0$, there exists a finite constant $C_\delta$ such that :
$$\forall x>0,\qquad u(x) \leq \frac{C_\delta}{x^{1-\delta}}. $$
\end{lemma}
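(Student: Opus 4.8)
The plan is to bootstrap from the crude bound $u\le 1$ to the polynomial decay $u(x)\le C_\delta x^{-(1-\delta)}$ by iterating Equation~(\ref{eq:u}) and tracking how each convolution against the law of $L_\e$ improves the exponent. First I would extract from Lemma~\ref{lem:equ} a clean linearized inequality: since $\E[\boldsymbol{p}]=1$, the term $\frac12\E[\boldsymbol{p}^2-\boldsymbol{p}]\E[1_{\{L_\e<x\}}u^2(x-L_\e)]$ is nonnegative and may be dropped, while $R\ge0$ can be absorbed using $R(z)\le\E[\boldsymbol{p}^3]u^3(z)\le\E[\boldsymbol{p}^3]u(z)$ for $z\ge0$ (as $u\le1$). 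This gives, for $x\ge0$,
\begin{equation*}
u(x)\le \E\left[1_{\{L_\e<x\}}u(x-L_\e)\right] + \E[\boldsymbol{p}^3]\,\E\left[1_{\{L_\e<x\}}u(x-L_\e)\right]^{?}
\end{equation*}
— more precisely $u(x)\le (1+\E[\boldsymbol{p}^3])\,\E[1_{\{L_\e<x\}}u(x-L_\e)]+|\Delta(x)|$, with $|\Delta(x)|\le Ke^{-\Phi(1)x}$ decaying at least polynomially fast. Write $\mu$ for the law of $L_\e$ restricted to $(-\infty,x)$; the key structural fact, guaranteed by the first part of Assumption~\ref{assum}, is that $\max(-L_\e,0)$ is integrable, so $\E[\max(-L_\e,0)]<\infty$ and the positive part has at worst the tail $\Pb(L_\e>t)\le e^{-\Phi(1)t}$ coming from $\Pb(S_\e\ge t)$. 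Thus $L_\e$ has a genuine (finite) first moment of its negative part and an exponentially small positive tail.

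Next I would run the induction on the exponent. Suppose we have shown $u(x)\le C x^{-\beta}$ for all $x\ge1$, for some $\beta\in[0,1)$; the base case $\beta=0$ is just $u\le1$. Plugging into the linearized inequality and splitting the expectation according to whether $L_\e\le x/2$ or $L_\e\in(x/2,x)$: on the first event $x-L_\e\ge x/2$ so $u(x-L_\e)\le C(x/2)^{-\beta}$, contributing at most $(1+\E[\boldsymbol{p}^3])C 2^\beta x^{-\beta}\Pb(L_\e\le x/2)$; on the second event we use $u\le1$ and $\Pb(L_\e>x/2)\le e^{-\Phi(1)x/2}$, which is negligible. The first contribution is \emph{not} yet an improvement in the exponent — the point is that I must instead integrate the inequality, exploiting that $u$ is decreasing. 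Following the same device as in Lemma~\ref{lem:p<1} and estimate~(\ref{eq:minu2}), I would multiply by a suitable weight and integrate over $[0,n]$: the convolution identity $\int_0^\infty x^{-\beta}\,\E[1_{\{L_\e<x\}}u(x-L_\e)]\,dx$ becomes, by Fubini, $\E[\int_{L_\e^+}^\infty x^{-\beta}u(x-L_\e)dx]$, and shifting variables together with the finite mean of $L_\e^-$ shows this is comparable to $\int_0^\infty x^{-\beta}u(x)dx$ up to a correction of order $\E[L_\e^+\,]\cdot(\text{something summable})$. The gain of one power comes precisely from the mismatch between $x^{-\beta}$ and $(x-L_\e)^{-\beta}$ being integrable against the law of $L_\e$ only when $\beta<1$, and from iterating: each pass through the integral equation converts a bound $\int_1^\infty x^{\gamma}u(x)\,dx<\infty$ into $\int_1^\infty x^{\gamma+1-\delta'}u(x)\,dx<\infty$ for any small $\delta'$, because of the exponentially small positive tail and the finite negative mean. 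Concretely I expect to prove by induction that $\int_1^\infty x^{k(1-\delta)-1}u(x)\,dx<\infty$ for every integer $k$, hence $\int_1^\infty x^{m}u(x)\,dx<\infty$ for every $m$, and then conclude pointwise via monotonicity, exactly as in~(\ref{eq:minu2}): $\int_{x-1}^x z^m u(z)\,dz\ge (x-1)^m u(x)$, giving $u(x)\le C_m x^{-m}$, which is far stronger than $x^{-(1-\delta)}$ (and in fact one only needs the first nontrivial step, $\int_1^\infty u(x)\,dx<\infty$ already forces $u(x)=o(1/x)$).

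The cleanest route, which I would actually carry out, is therefore: (a) show $\int_0^\infty u(x)\,dx<\infty$ by integrating the linearized inequality against the constant weight $1$ over $[0,n]$, using $\E[L_\e^+]<\infty$ to control the boundary terms $\E[\int_0^{L_\e^+}(\cdots)]$ and letting $n\to\infty$ — here the nonnegativity of the dropped quadratic term and the sign of $R$ are what make the inequality close; (b) deduce from $\int_0^\infty u<\infty$ and monotonicity that $u(x)\le \frac1x\int_0^x u\le \frac1x\int_0^\infty u \to 0$, so $u(x)=O(1/x)$, in particular $u(x)\le C_\delta x^{-(1-\delta)}$; and optionally (c) bootstrap further for use in later sections. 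The main obstacle is step~(a): one has to be careful that integrating $\E[1_{\{L_\e<x\}}u(x-L_\e)]$ over $x\in[0,n]$ produces $\E[e^{?}\int\cdots]$ terms in which the positive part of $L_\e$ appears with the \emph{wrong} sign, so the naive bound diverges; the resolution is that on $\{L_\e>0\}$ we have $L_\e\le S_\e$ with $\Pb(S_\e\ge t)=e^{-\Phi(1)t}$, so those terms are in fact summable, whereas on $\{L_\e\le0\}$ the shift only helps. This is where Assumption~\ref{assum} (integrability of $\max(-L_\e,0)$) is genuinely used. I expect the resulting inequality to read, after letting $n\to\infty$,
\begin{equation*}
\left(1-\E[\boldsymbol{p}]\right)\int_0^\infty u(x)\,dx \;\le\; (\text{finite terms}),
\end{equation*}
but since $\E[\boldsymbol{p}]=1$ the left side vanishes, so this particular weighting is too weak and one must instead weight by $x^{-\delta}$ (or use the strictly negative sign coming from the $-u(x)$ on the left after moving the convolution term partially to the left), which is exactly the source of the loss of $\delta$ in the exponent. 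That bookkeeping — choosing the weight $x^{-\delta}$, controlling $\E[(S_\e)^{?}e^{\delta'S_\e}]<\infty$ for the positive part, and checking the convolution term picks up a factor strictly less than $1$ — is the technical heart of the argument, and everything else is routine.
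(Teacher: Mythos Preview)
Your plan has a genuine gap: by discarding the quadratic term $-\frac{\sigma^2}{2}\E[\boldsymbol{p}^2-\boldsymbol{p}]\,\E[1_{\{L_\e<x\}}u^2(x-L_\e)]$ and bounding $R\le \E[\boldsymbol{p}^3]u$, you arrive at a linearized inequality of the form
\[
u(x)\;\le\;\bigl(1+\E[\boldsymbol{p}^3]\bigr)\,\E\bigl[1_{\{L_\e<x\}}u(x-L_\e)\bigr]\;+\;|\Delta(x)|,
\]
whose convolution constant is \emph{strictly larger than $1$}. From such an inequality no decay can ever be extracted: integrating over $[0,n]$ (with any polynomial weight) produces $\int_0^n w(x)u(x)\,dx$ on the left and at least $(1+\E[\boldsymbol{p}^3])\int_0^n w(x)u(x)\,dx$ plus bounded terms on the right, so the inequality is vacuous. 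Your own computation confirms this --- you observe that with weight $1$ the left side becomes $(1-\E[\boldsymbol{p}])\int u=0$, and you propose to fix it by taking weight $x^{-\delta}$, but changing the weight does not touch the constant $1+\E[\boldsymbol{p}^3]$; the gap you need simply is not there. Even if you had kept the constant exactly $1$ (dropping $R$ altogether), the resulting critical renewal-type inequality $u\le u\ast\mu+\text{small}$ does not force polynomial decay either.

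The mechanism you are missing is that the quadratic term is not a nuisance to be discarded but the entire source of the decay. The paper keeps it on the \emph{left}: starting from inequality~(\ref{eq:startbound}) (which lower-bounds the $u^2$-convolution by $\Pb(L_\e>0)u^2(x)$), one takes a Laplace transform. The linear terms then combine as
\[
\bigl(\E[\boldsymbol{p}]\,\E[e^{-\lambda S_\e}]-1\bigr)\int_0^\infty e^{-\lambda z}u(z)\,dz
\;=\;\Bigl(\tfrac{\Phi(1)}{\lambda+\Phi(1)}-1\Bigr)\int_0^\infty e^{-\lambda z}u(z)\,dz\;\le\;0,
\]
so they may be thrown away, leaving $\frac{\sigma^2}{2}\Pb(L_\e>0)\int e^{-\lambda z}u^2(z)\,dz$ bounded uniformly as $\lambda\downarrow 0$. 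This gives $u^2\in L^1$, hence $u(x)\le C_1 x^{-1/2}$ by monotonicity, and then an induction (multiplying~(\ref{eq:startbound}) by $x$ before taking Laplace transforms and feeding in the previous bound) upgrades the exponent from $1-2^{-n}$ to $1-2^{-(n+1)}$. In short: the proof is driven by the nonlinearity, and linearizing destroys it.
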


\begin{proof}
We shall prove by induction that for every $n\in \N$, there exists a constant $C_n$ such that 
\begin{equation}\label{eq:rec}
\forall x>0,\qquad  u(x) \leq  C_n\,x^{\frac{1}{2n}-1}.
\end{equation}
\subsubsection{The base case $n=1$.} 
We start by taking the Laplace transform of Equation (\ref{eq:startbound}), whose right-hand side only involves the random variable $S_\e$. This yields with $\lambda>0$ :
$$
\Pb(L_\e>0)\frac{\sigma^2}{2} \int_0^{+\infty}e^{-\lambda z}u^2(z) dz - \E\left[e^{-\lambda S_\e}\right] \E[\boldsymbol{p}^3]\int_0^{+\infty}e^{-\lambda z}u^3(z) dz \leq     \left(1+\E[\boldsymbol{p}^2]\right)\frac{1-\E\left[e^{-\lambda S_\e}\right]}{\lambda}$$
i.e. using the explicit distribution of $S_\e$ 
$$
\int_0^{+\infty}e^{-\lambda z}\left(\Pb(L_\e>0)\frac{\sigma^2}{2} u^2(z) - \E[\boldsymbol{p}^3] u^3(z)\right)dz\le \frac{1+\E[\boldsymbol{p}^2]}{\lambda+ \Phi(1)}.$$
As before, since $u$ is decreasing and converges towards 0 as $x\rightarrow +\infty$, we deduce by letting $\lambda\downarrow0$ that $u^2$ is integrable. Then 
a change of variables yields
$$u^2\left(\frac{1}{\lambda}\right)\int_0^{1}e^{- z}dz  \leq  \lambda \int_0^{+\infty} e^{-\lambda z}u^2 (z) dz\leq \lambda \int_0^{+\infty} u^2 (z) dz.$$
Setting $\lambda =1/x$, we finally conclude that there exists a finite constant $C_{1}$ such that :
$$u(x) \leq \frac{C_{1} }{\sqrt{x}}$$
which is (\ref{eq:rec}) for $n=1$.
\subsubsection{Induction step}
Fix $n\in \N$ and assume that Formula (\ref{eq:rec}) holds true. Multiplying Equation (\ref{eq:startbound}) by $x$ and taking the Laplace transform, we obtain after some simplifications using the decomposition $x=x-S_\e + S_\e$ :
\begin{align*}
&\int_0^{+\infty}e^{-\lambda x}x\left(\Pb(L_\e>0)\frac{\sigma^2}{2} u^2(x) - \E[\boldsymbol{p}^3] u^3(x)\right)dx\\
&\qquad \leq \E\left[S_\e e^{-\lambda S_\e }\right] \int_0^{+\infty}e^{-\lambda x}u(x)dx + \frac{1 +\E[\boldsymbol{p}^2] }{(\lambda + \Phi(1))^2} +    \E[\boldsymbol{p}^3] \E\left[S_\e e^{-\lambda S_\e }\right] \int_0^{+\infty}e^{-\lambda x}u^3(x)dx.
\end{align*}
Fix  $\varepsilon$ small enough and take $A>0$  such that for any $x\geq A$ 
$$  \E[\boldsymbol{p}^3]  u(x) \leq \Pb(L_\e>0)\frac{\sigma^2}{2} - \varepsilon. $$
A change of variables then yields
\begin{multline*}
\frac{\varepsilon}{\lambda^2} \int_{\lambda A}^{+\infty} e^{-z} z u^2\left(\frac{z}{\lambda}\right)dz \\\leq
\frac{\E\left[S_\e\right]}{\lambda} \int_0^{+\infty}e^{-\lambda z}\left(\frac{z}{\lambda}\right)dz + \frac{1 +\E[\boldsymbol{p}^2] }{(\lambda + \Phi(1))^2} +    \E[\boldsymbol{p}^3] \E\left[S_\e\right] \int_0^{+\infty}u^3(z)dz +  \E[\boldsymbol{p}^3] \int_0^A z u^3(z)dz
\end{multline*}
i.e., for $\lambda$ small enough such that $\lambda A\leq 1/2$, 
$$\varepsilon u^2\left(\frac{1}{\lambda}\right) \int_{1/2}^{1} e^{-z} z dz \leq \lambda \E\left[S_\e\right] \int_0^{+\infty}e^{- z}u\left(\frac{z}{\lambda}\right) dz + \lambda^2K$$
for some constant $K>0$.
Plugging the induction hypothesis in the right-hand side yields
$$\varepsilon u^2\left(\frac{1}{\lambda}\right) \int_{1/2}^{1} e^{-z} z dz \leq \lambda^{2-\frac{1}{2^n}} C_n\E\left[S_\e\right] \int_0^{+\infty}e^{- z} z^{1-\frac{1}{2n}}dz + \lambda^2K$$
and replacing $\lambda=1/x$ as before, we finally obtain the existence of a constant $C_{n+1}$ such that~:
$$u(x)   \leq C_{n+1}\, x^{\frac{1}{2^{n+1}}-1}.$$
As a consequence, we deduce that  (\ref{eq:rec}) holds for every $n\in \N$, which proves Lemma \ref{lem:apriori+} by monotony.

\end{proof}

\subsection{A new equation for $u$}
The purpose of this subsection is to prove the following new equation for the function $u$ :

\begin{proposition}
The function $u$ is a solution of the integral equation :
$$u(x)-\Delta(x) = \int_0^{+\infty} \left(\frac{\sigma^2}{2} u^2(x+z)- R(x+z)-\Delta(x+z)\right)W(z) dz.$$
\end{proposition}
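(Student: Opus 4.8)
The plan is to start from the integral equation \eqref{eq:u} of Lemma \ref{lem:equ}, rewrite it in terms of the function
$$g(x) = R(x) - \frac{\sigma^2}{2}u^2(x) \qquad (\sigma^2 = \E[\boldsymbol{p}^2-\boldsymbol{p}]),$$
so that, since $\E[\boldsymbol{p}]=1$, equation \eqref{eq:u} becomes
$$u(x) = \E\left[1_{\{L_\e<x\}}\,u(x-L_\e)\right] + \E\left[1_{\{L_\e<x\}}\,g(x-L_\e)\right] + \Delta(x),$$
i.e. $u = (u+g)\ast \rho + \Delta$, where $\rho$ is the law of $L_\e$ restricted by the indicator. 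The idea is to take Fourier transforms: the a priori estimate of Lemma \ref{lem:apriori+} (together with Assumption \ref{assum} guaranteeing $\max(-L_\e,0)$ is integrable and hence that $\widehat{\rho}$ is well-defined and smooth enough near $0$) ensures $u$, $g$, $\Delta$ all have well-defined Fourier transforms on a suitable strip/line, so that one gets
$$\widehat{u}(\xi) = \frac{\widehat{g}(\xi)+\widehat{\Delta}(\xi)}{1-\widehat{\rho}(\xi)} + \widehat{\Delta}(\xi) \cdot \frac{\widehat{\rho}(\xi)}{1-\widehat{\rho}(\xi)}\,,$$
after reorganizing. The key computation is to identify $1/(1-\widehat{\rho})$, or rather the relevant resolvent kernel, as the Fourier/Laplace transform of the scale function $W$. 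Indeed $1-\widehat{\rho}(\xi)$ is (up to the indicator truncation) $1 - \E[e^{\i\xi L_\e}] = 1 - (1-\Psi(\i\xi))^{-1} = -\Psi(\i\xi)/(1-\Psi(\i\xi))$, and from \eqref{eq:W} the function $W = W^{(0)}$ has Laplace transform $1/\Psi$; this is precisely the bridge we need.

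\textbf{Key steps, in order.} First I would record the convolution identity $u = (u+g)\ast\rho + \Delta$ above from \eqref{eq:u} and $\E[\boldsymbol{p}]=1$. Second, I would isolate $u - \Delta$ and manipulate: writing $h = u - \Delta$, one has $h = (h+g+\Delta)\ast\rho$, hence $h - h\ast\rho = (g+\Delta)\ast\rho$, i.e. $h\ast(\delta_0 - \rho) = (g+\Delta)\ast\rho$. Third — the crux — I would convolve both sides on the right by the renewal-type kernel associated with $W$. Concretely, using \eqref{eq:Kyp} with $q=0$ (or its $q\downarrow 0$ limit under Assumption \ref{assum}, where the $0$-potential density is $\theta^{(0)}(z) = \Phi'(0)e^{-\Phi(0)z} - W(-z)$, and $\Phi(0)=0$ so $\theta^{(0)}(z) = \frac{1}{\Psi'(0^+)} - W(-z)$ when $\Psi'(0^+)>0$, and $= -W(-z)$ in the oscillating case), one checks that convolving $\delta_0 - \rho$ against $W(-\,\cdot\,)$ reproduces a delta, so that
$$h(x) = \int_\R (g(z)+\Delta(z))\,\rho \ast W(-\,\cdot\,)\,(x-z)\,dz$$
collapses, after using $L_\e \le S_\e$, $g$ and $W$ supported on $[0,\infty)$, and the explicit form of $\rho$, to
$$u(x) - \Delta(x) = \int_0^{+\infty}\left(-g(x+z) - \Delta(x+z)\right)W(z)\,dz = \int_0^{+\infty}\left(\frac{\sigma^2}{2}u^2(x+z) - R(x+z) - \Delta(x+z)\right)W(z)\,dz,$$
which is the claimed equation. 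The sign flip $x-z \mapsto x+z$ comes from the scale function entering through $W(-\,\cdot\,)$, reflecting that $W$ is supported on the negatives in the potential density; I would make this precise by Fourier inversion rather than by manipulating kernels directly, to avoid convergence pitfalls.

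\textbf{Main obstacle.} The delicate point is justifying the Fourier/Laplace manipulations when $\Psi'(0^+) \ge 0$: here $u$ decays only polynomially (Lemma \ref{lem:apriori+}), $W$ grows (sub-exponentially) at $+\infty$, and the resolvent $1/(1-\widehat\rho)$ has a singularity at $\xi=0$ since $\Psi(\i\xi)\to 0$. So one cannot work on the imaginary axis naively; I would either (a) work with Laplace transforms in a half-plane $\Re(\lambda)>0$ and carefully take the limit $\lambda\downarrow 0$, controlling the boundary terms using the bounds \eqref{eq:omega3}, \eqref{eq:phiR} and Lemma \ref{lem:apriori+} (noting $\Delta$ decays exponentially on $x\ge 0$ by \eqref{eq:phiR}, and $g = R - \frac{\sigma^2}{2}u^2$ is $O(u^2)$ hence integrable once we know $u^2$ is integrable, which falls out of the base case of Lemma \ref{lem:apriori+}); or (b) regularize by treating the equation for $W^{(q)}$ with $q>0$ via \eqref{eq:W}, \eqref{eq:Kyp}, deriving the identity with $W^{(q)}$ and an extra $q\int_0^\infty(\cdots)$ term, then letting $q\downarrow 0$ using monotone/dominated convergence and the asymptotics \eqref{eq:Wq} is \emph{not} available here so one instead uses that $W^{(q)}\uparrow W$ pointwise. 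Finally, one must verify the convolution $\rho \ast W(-\,\cdot\,)$ really produces the right truncation to $[0,\infty)$ and that the $1_{\{L_\e<x\}}$ indicators combine correctly with the support of $W$; this is bookkeeping but it is where an error would most easily hide, so I would carry it out explicitly on the Laplace side where everything is an honest absolutely convergent integral.
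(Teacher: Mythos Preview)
Your strategic outline is sound: reducing \eqref{eq:u} to the Fourier identity $\Psi(i\xi)\,\F(u-\Delta)=\F\bigl(\tfrac{\sigma^2}{2}u^2-R-\Delta\bigr)$ and then inverting via the scale function is exactly the paper's route, and you have correctly located the real obstacle as the singularity of $1/\Psi$ at the origin when $\Psi'(0^+)\ge 0$. But neither of your proposed fixes for that obstacle actually works.

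Your option (a) --- pass to Laplace transforms in $\Re(\lambda)>0$ and take a boundary limit --- fails for a concrete reason: in the critical case Lemma~\ref{lem:apriori+} only gives $u(x)\le C_\delta x^{-(1-\delta)}$, so $\int_0^\infty e^{\lambda x}u(x)\,dx=+\infty$ for every $\lambda>0$; the bilateral Laplace transform of $u$ simply does not exist off the imaginary axis. (The one-sided transform $\int_0^\infty e^{-\lambda x}u(x)\,dx$ exists, but then the convolution factor becomes $\E[e^{-\lambda L_\e}]$, which need not be finite since $L$ is spectrally negative.) This is precisely why the subcritical argument of Section~\ref{sec:2}, which you are implicitly trying to recycle, does not carry over. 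Your option (b) --- regularize with $W^{(q)}$ --- is not developed enough to assess; it is unclear what intermediate equation with an ``extra $q\int(\cdots)$ term'' you would derive without already having a valid transform identity.

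The paper's way around the singularity is different. It stays on the imaginary axis but never writes $1/\Psi(i\xi)$ as a transform. Instead it multiplies by $i\xi$ to obtain
\[
\frac{i\xi}{\Psi(i\xi)}\,\F\!\left(\tfrac{\sigma^2}{2}u^2-R-\Delta\right)(\xi)=i\xi\,\F(u-\Delta)(\xi),
\]
and identifies $i\xi/\Psi(i\xi)$ as the Fourier transform of $W'$ (not $W$). This is where Assumption~\ref{assum} enters: it guarantees $W\in\mathcal C^1$, and $W'$ is decreasing to some limit $w_\infty\ge 0$. To make the convolution rigorous the paper proves a dedicated lemma (Lemma~\ref{lem:Fourier}) showing that the Fourier transform of a convolution $\varphi\ast f$ factorizes when $\varphi$ is merely monotone decreasing to $0$ and $f\in L^1$; this lemma is invoked twice, once with $\varphi=u$, $f=\theta^{(1)}$, and once with $\varphi=W'-w_\infty$, $f=\tfrac{\sigma^2}{2}u^2-R-\Delta$. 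A further wrinkle when $\Psi'(0^+)=0$ is that $w_\infty$ need not vanish, and one must show separately that $\F\bigl(\tfrac{\sigma^2}{2}u^2-R-\Delta\bigr)(0)=0$ to cancel the resulting $w_\infty/(i\xi)$ term. Only after these steps does one integrate by parts (undoing the factor $i\xi$), invert, and apply Fubini--Tonelli to reach the stated equation. The missing ingredients in your proposal are thus the passage through $W'$ and the Abel--Dirichlet-type Fourier lemma for monotone functions.
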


\noindent
In the following, we shall denote the  Fourier transform of a measurable function $f$, provided it exists, by :
$$\F(f)(\xi) = \lim_{n\rightarrow +\infty} \int_{-n}^n e^{i\xi z} f(z) dz=  \int_\R e^{i\xi z} f(z) dz.$$
In particular, since $u$ is decreasing and converges towards 0, we deduce from the Abel-Dirichlet's convergence test for improper integrals that $\F(u)$ is well-defined (although $u$ might not be integrable).  Taking the Fourier transform of Equation (\ref{eq:u}), and applying the Fubini theorem on the left-hand side since $u^2$ is integrable from Lemma \ref{lem:apriori+}, we deduce that  
\begin{equation}\label{eq:F-}
\frac{\sigma^2}{2} \E[e^{i\xi L_\e}] \F(u^2)(\xi) = \int_\R e^{i\xi x}  \int_\R u(x-z) \Pb(L_\e\in dz) dx-\F(u)(\xi) + \E[e^{i\xi L_\e}]  \F(R)(\xi) +\F(\Delta)(\xi). 
\end{equation}
Note that the Fourier transform of $\Delta$ is well-defined, since from Lemma \ref{lem:equ} and Assumption \ref{assum} the function $\Delta$ is integrable, i.e. 
$$\int_\R |\Delta(x)| dx\leq   K \int_0^{+\infty} e^{-\Phi(1) x} dx + K \int_{-\infty}^{0} \Pb(L_\e< x)dx = \frac{K}{\Phi(1)} + K \E[\max(-L_\e, 0)] <+\infty.$$

\noindent
To compute the remaining convolution product on the right-hand side, we shall rely on the following lemma :

\begin{lemma}\label{lem:Fourier}
Let $f: \R\rightarrow \R$ be an integrable function and let $\varphi : [0,+\infty)\rightarrow [0,+\infty)$ be a decreasing  function converging towards 0.
%
%such that 
%$$\forall \xi\neq0,\qquad \left|\int_0^{+\infty}e^{i\xi x} \varphi(x) dx  \right|<+\infty.$$
Then, the Fourier transform of the convolution product is given by : 
$$  \int_\R  e^{i\xi x} \left( \int_\R \varphi(x-z) f(z) dz\right) dx = \F(\varphi)(\xi) \times \F(f)(\xi),\qquad \quad\xi\neq 0.$$
\end{lemma}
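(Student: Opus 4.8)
The statement is a Fubini-type claim: the Fourier transform of the convolution $\varphi * f$ factorizes, where $f \in L^1(\R)$ but $\varphi$ is only bounded, decreasing, and vanishing at $+\infty$ (so that $\F(\varphi)$ exists only as an improper/conditionally convergent integral via the Abel--Dirichlet test, exactly as invoked for $u$ just before the lemma). The plan is to approximate $\varphi$ by the truncations $\varphi_n = \varphi\, 1_{[0,n]}$, for which everything is a genuine $L^1$ statement, and then pass to the limit, controlling the tail uniformly in $x$ using monotonicity of $\varphi$ and the $L^1$ bound on $f$.

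First I would record that for each fixed $n$, $\varphi_n \in L^1(\R)$, so $\varphi_n * f \in L^1(\R)$ and the classical convolution theorem gives $\F(\varphi_n * f)(\xi) = \F(\varphi_n)(\xi)\,\F(f)(\xi)$ for all $\xi$, with an honest application of Fubini since $\int\int |\varphi_n(x-z)||f(z)|\,dz\,dx < \infty$. Next, on the right-hand side, $\F(\varphi_n)(\xi) \to \F(\varphi)(\xi)$ as $n\to\infty$ for every $\xi \neq 0$: this is precisely the Abel--Dirichlet convergence already used for $u$, i.e. $\int_N^M e^{i\xi z}\varphi(z)\,dz \to 0$ uniformly in the truncation endpoints because $\varphi$ decreases to $0$ (second mean value theorem for integrals gives $|\int_N^M e^{i\xi z}\varphi(z)\,dz| \le 2\varphi(N)/|\xi|$), and the same bound shows $\F(\varphi)$ is well-defined. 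So the RHS converges to $\F(\varphi)(\xi)\F(f)(\xi)$.

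The main work is the left-hand side: I must show $\int_\R e^{i\xi x}(\varphi_n * f)(x)\,dx \to \int_\R e^{i\xi x}(\varphi * f)(x)\,dx$, where the outer integrals are again improper. Write $(\varphi*f)(x) - (\varphi_n * f)(x) = \int_{|x-z|>n,\, x-z\ge 0} \varphi(x-z) f(z)\,dz$ (recalling $\varphi$ is defined on $[0,+\infty)$, so implicitly $\varphi = 0$ on the negatives, hence only $x - z \ge 0$ contributes). Since $0 \le \varphi(x-z) \le \varphi(n)$ on this region, $\|\varphi*f - \varphi_n*f\|_\infty \le \varphi(n)\|f\|_1 \to 0$; moreover $\varphi * f$ is itself bounded and — this is the delicate point — I need enough decay/regularity in $x$ to make sense of its improper Fourier integral and to pass the limit through. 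The cleanest route: note $\varphi*f = \varphi_1 * f + (\varphi - \varphi_1)*f$ is a sum of an $L^1$ function and a bounded function which, by dominated convergence and monotonicity, inherits the "decreasing-plus-$o(1)$" tail behaviour from $\varphi$; then apply the same second-mean-value-theorem tail estimate to $\varphi*f$ directly, plus the uniform bound $\|\varphi*f-\varphi_n*f\|_\infty\le\varphi(n)\|f\|_1$, to interchange $\lim_n$ with $\lim_{\text{truncation}}$.

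I expect the genuine obstacle to be this last interchange of the two limiting procedures (truncation of $\varphi$ versus truncation of the outer integral), since neither integral is absolutely convergent; the resolution is to split $\varphi = \varphi_1 + (\varphi-\varphi_1)$ so that the $L^1$ part is handled by the classical theorem and the bounded remainder $(\varphi - \varphi_1)$ is still monotone decreasing to $0$, so its convolution with $f\in L^1$ admits the uniform tail bound $|\int_N^M e^{i\xi x}((\varphi-\varphi_1)*f)(x)\,dx| \le C(\varphi(N) + \text{tail of }f)/|\xi|$ — uniform in $n$ — which legitimizes swapping the limits. The hypothesis $\xi \neq 0$ is essential and enters exactly through the $1/|\xi|$ factor in every tail estimate.
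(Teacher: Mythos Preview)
Your overall plan---truncate $\varphi$ to $\varphi_n=\varphi\,1_{[0,n]}$, invoke the $L^1$ convolution theorem, and pass to the limit---is different from the paper's, which instead truncates the \emph{outer} integral to $[-n,n]$, applies Fubini there (legitimate since the integrand is bounded on compacts), changes variables to $\int_0^\infty e^{i\xi z}\varphi(z)\int_{-n-z}^{n-z}e^{i\xi y}f(y)\,dy\,dz$, and then lets $n\to\infty$. Both routes hit the same obstacle, which you correctly name: one must make sense of the improper integral $\int_\R e^{i\xi x}(\varphi*f)(x)\,dx$ and justify an interchange of limits when nothing is absolutely convergent.

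The gap is in your proposed resolution of that obstacle. You claim that $(\varphi-\varphi_1)*f$ ``inherits the decreasing-plus-$o(1)$ tail behaviour from $\varphi$'' and that the second mean value theorem then yields a tail bound $\bigl|\int_N^M e^{i\xi x}((\varphi-\varphi_1)*f)(x)\,dx\bigr|\le C(\varphi(N)+\text{tail of }f)/|\xi|$. This step is unjustified: convolution with a general $f\in L^1$ destroys monotonicity, so neither Abel--Dirichlet nor the second mean value theorem applies to $(\varphi-\varphi_1)*f$ as a function of $x$, and the uniform $L^\infty$ bound $\varphi(n)\|f\|_1$ is useless on an infinite window. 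The missing ingredient---and the paper's key device---is to integrate by parts in the $\varphi$-variable using the tail primitive $\Phi(y)=\int_y^\infty e^{i\xi z}\varphi(z)\,dz$, which is bounded and tends to $0$ by Abel--Dirichlet. After Fubini on a finite window and a change of variable, the discrepancy from $\F(\varphi)(\xi)\F(f)(\xi)$ reduces to a boundary term plus $\int_0^\infty \Phi(y)\,e^{i\xi(n-y)}f(n-y)\,dy$; splitting at a level $A$ with $|\Phi(y)|\le\varepsilon$ for $y\ge A$ bounds this by $\sup|\Phi|\int_{n-A}^n|f|+\varepsilon\|f\|_1\to\varepsilon\|f\|_1$. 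Your route through $\varphi_n$ can be completed, but only by inserting exactly this estimate to show that $\F(\varphi*f)$ exists and to swap the limits---at which point the paper's direct truncation of the outer integral is shorter.
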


\begin{proof}
Of course, Lemma \ref{lem:Fourier} is just a consequence of Fubini theorem if $\varphi$ is integrable. But when $\varphi$ is not integrable, as will be the case here, some care is needed.
In particular, observe first that thanks to the Abel-Dirichlet's convergence test for improper integral, the Fourier transform of $\varphi$ is well-defined for $\xi\neq0$ :
$$\forall \xi\neq0,\qquad \left|\int_0^{+\infty}e^{i\xi x} \varphi(x) dx  \right|<+\infty.$$
Now, fix $n>0$. Since, by assumption, 
$$\int_{-n}^n \int_0^{+\infty} \left| e^{i\xi x}  f(x-z) \varphi(z)\right| dz \,dx \leq 2n \varphi(0) \int_{-\infty}^{+\infty} |f(x)|dx <+\infty$$
we may apply the Fubini theorem and a change of variable to obtain 
$$\int_{-n}^n e^{i\xi x} \int_0^{+\infty} f(x-z) \varphi(z)dz \,dx = \int_0^{+\infty} e^{i\xi z} \varphi(z)\int_{-n -z}^{n-z} e^{i\xi y} f(y) dy\, dz.$$
To avoid lengthy expressions, we shall proceed in two steps by cutting the last integral at $0$.
Integrating by parts, we first write:
\begin{align*}
& \F(\varphi)(\xi) \times \int_0^{+\infty} e^{i\xi y} f(y) dy
 - \int_0^{+\infty} e^{i\xi z} \varphi(z)\int_{0}^{n-z} e^{i\xi y} f(y) dy\, dz \\
& = \int_0^{+\infty} e^{i\xi z} \varphi(z) dz \int_{n}^{+\infty} e^{i\xi y} f(y) dy  + \int_0^{+\infty}   \left(\int_y^{+\infty} e^{i\xi z} \varphi(z) dz\right)  e^{i\xi(n-y)} f(n-y) dy.
\end{align*}
The first term will converge to 0 as $n\rightarrow +\infty$. To study the limit of the second term, let $\varepsilon>0$ and fix $A$ large enough such that 
$$\forall x\geq A,\qquad \left|\int_x^{+\infty} e^{i \xi z} \varphi(z)dz \right| \leq \varepsilon.$$
We then have :
\begin{align*}
&\left|\int_0^{A}   \left(\int_y^{+\infty} e^{i\xi z} \varphi(z) dz\right)  e^{i\xi(n-y)} f(n-y) dy\right| + \left| \int_A^{+\infty}   \left(\int_y^{+\infty} e^{i\xi z} \varphi(z) dz\right)  e^{i\xi(n-y)} f(n-y) dy \right|\\
&\leq     \sup_{y\geq0}\left|\int_y^{+\infty} e^{i\xi z} \varphi(z) dz\right| \int_0^A|f(n-y)|dy +  \varepsilon \int_\R  |f(x)| dx \xrightarrow[n\rightarrow +\infty]{}  \varepsilon \int_\R  |f(x)| dx,
\end{align*}
which proves that
$$\lim_{n\rightarrow +\infty} \int_0^{+\infty} e^{i\xi z} \varphi(z)\int_{0}^{n-z} e^{i\xi y} f(y) dy\, dz =  \F(\varphi)(\xi) \times \int_0^{+\infty} e^{i\xi y} f(y) dy.$$
A similar argument shows that
$$ 
 \lim_{n\rightarrow+\infty} \int_0^{+\infty} e^{i\xi z} \varphi(z)\int_{-n-z}^0 e^{i\xi y} f(y) dy\, dz =\F(\varphi)(\xi) \times \int_{-\infty}^{0} e^{i\xi y} f(y) dy. $$
As a consequence, we have obtained that 
\begin{align*}
\int_\R e^{i\xi x} \int_0^{+\infty} f(x-z) \varphi(z)dz \,dx &= \lim_{n\rightarrow +\infty} \int_{-n}^n e^{i\xi x} \int_0^{+\infty} f(x-z) \varphi(z)dz \,dx \\
&=\lim_{n\rightarrow +\infty}  \int_0^{+\infty} e^{i\xi x} \varphi(x)\int_{-n -x}^{n-x} e^{i\xi z} f(z) dz\, dx \\
&=  \F(\varphi)(\xi) \times \F(f)(\xi)
\end{align*}
which concludes the proof of Lemma \ref{lem:Fourier}.
\end{proof}

\medskip
\noindent
Recall now that the distribution of $L_\e$ is actually the $1$-potential measure of $L$, i.e. from (\ref{eq:Kyp}) the random variable $L_\e$ is absolutely continuous with density given by $\theta^{(1)}$. Applying Lemma \ref{lem:Fourier} to Equation (\ref{eq:F-}) with $f=\theta^{(1)}$ which is integrable, and $\varphi=u$ which is decreasing and converging towards 0, we thus obtain
$$
\frac{\sigma^2}{2} \E[e^{i\xi L_\e}] \F(u^2)(\xi) = \E[e^{i\xi L_\e}] \F(u)(\xi) -\F(u)(\xi) +\E[e^{i\xi L_\e}] \F(R)(\xi)+ \F(\Delta)(\xi). 
$$
By definition, the characteristic function of $L_\e$ is given by 
$$\E\left[e^{i\xi L_\e}\right] = \int_0^{+\infty}  e^{-t}  e^{t \Psi(i\xi)}dt =  \frac{1}{1-\Psi(i \xi)}$$
which yields the equation
$$\F\left(\frac{\sigma^2}{2} u^2- R-\Delta\right)(\xi)  = \Psi(i\xi)  \F(u-\Delta)(\xi),  $$
or equivalently, 
\begin{equation}\label{eq:u'}
\frac{i\xi}{\Psi(i\xi)} \F\left(\frac{\sigma^2}{2} u^2-R-\Delta\right)(\xi) = i\xi \,\F(u-\Delta)(\xi).
\end{equation}

%Integrating by parts the right-hand side, we further have
%\begin{equation}\label{eq:u'}
%\frac{i\xi}{\Psi(i\xi)} \F\left(\frac{\sigma^2}{2} u^2-R- R\right)(\xi) = \F((R-u)^\prime)(\xi).
%\end{equation}

\noindent
The next step consists in showing that the function $\displaystyle \xi\longrightarrow i\xi/\Psi(i\xi)$ is actually the Fourier transform of $W^\prime$, which is well-defined thanks to Assumption \ref{assum}.
%Indeed, from \cite{HuKy}, since $\Psi^\prime(0^+)\geq0$, the function $W$ is concave on $(0,+\infty)$ hence it is absolutely continuous on any compact subset of $ (0,+\infty)$. As a 
Integrating by parts the definition of $W$ given in (\ref{eq:W}) we obtain
\begin{equation}\label{eq:W'}
\int_0^{+\infty} e^{-\beta x} W^\prime(x)dx = \frac{\beta}{\Psi(\beta)},\qquad \Re(\beta)>0.
\end{equation}
We now write down the extension of Formula (\ref{eq:W'}) to the case $\beta = i\xi$ with $\xi\neq0$. 
Since $W$ is increasing and concave, the function $W^\prime$ is positive and decreasing, and we shall denote by $w_\infty$ its limit : $\lim\limits_{x\rightarrow +\infty} W^\prime(x) = \inf\limits_{x\geq 0}W^\prime(x)= w_\infty$. As a consequence, from the Abel- Dirichlet's convergence test for improper integrals, the Fourier transform of $W^\prime-w_\infty$ is well-defined for $\xi\neq 0$.
Let $\varepsilon>0$ and take $A$ large enough such that 
$$\forall x\geq A,\qquad \left|\int_x^{+\infty} e^{-i \xi z} (W^\prime(z)-w_\infty)dz \right| \leq \varepsilon.$$
 Integrating by parts, we have for $h>0$ :
\begin{align*}
\left|\int_0^{+\infty} \left(e^{- hx}-1\right) e^{-i\xi x} (W^\prime(x)-w_\infty)dx  \right| &=h  \left|\int_0^{+\infty}e^{- hx} \int_x^{+\infty }e^{-i\xi z} (W^\prime(z)-w_\infty)dz\, dx  \right| \\
&\leq h \int_0^A \left|  \int_x^{+\infty }e^{-i\xi z} (W^\prime(z)-w_\infty)dz\right|dx + \varepsilon
\end{align*}
which proves that 
$$ \lim_{h\downarrow 0} \int_0^{+\infty} e^{- hx} e^{-i\xi x} (W^\prime(x)-w_\infty)dx =\int_0^{+\infty} e^{-i\xi x} (W^\prime(x)-w_\infty)dx.$$
Then, using the continuity of $\Psi$, we obtain still for $\xi\neq0$ :
\begin{align*}
\notag \frac{i\xi}{\Psi(i\xi)}   &= \lim_{h\downarrow 0}\frac{h+i\xi}{\Psi(h+i\xi)}\\
\notag &=  \lim_{h\downarrow 0} \int_0^{+\infty} e^{- hx} e^{-i\xi x} (W^\prime(x)-w_\infty)dx +  \int_0^{+\infty} e^{- hx} e^{-i\xi x} w_\infty dx   \\
& =\int_0^{+\infty} e^{-i\xi x} (W^\prime(x)-w_\infty)dx + \frac{w_\infty}{i\xi}.
\end{align*}

Plugging this last expression  in (\ref{eq:u'}) and computing the convolution product, using again Lemma \ref{lem:Fourier} with $f =\frac{\sigma^2}{2} u^2-R- \Delta$ which is integrable thanks to Lemma \ref{lem:apriori+}, and $\varphi=W^\prime-w_\infty$ which is decreasing with limit 0, one obtains :
\begin{multline}\label{eq:FF}
\int_{-\infty}^{+\infty} e^{i \xi x} \int_0^{+\infty} \left(\frac{\sigma^2}{2} u^2(z+x)-R(z+x)- \Delta(z+x)\right)(W^\prime(z)-w_\infty) dz \, dx \\
 =i\xi\, \F(u-\Delta)(\xi) - \frac{w_\infty}{i\xi}\F\left(\frac{\sigma^2}{2} u^2-R- \Delta\right)(\xi) .
\end{multline}
We now check that the terms in $w_\infty$ cancel. When $\Psi^\prime(0^+)>0$, we necessarily have $w_\infty=0$ since the function $W^\prime$ is integrable as can be seen by letting $\beta\downarrow 0$ in  Formula (\ref{eq:W'}) and applying the monotone convergence theorem
$$\int_0^{\infty} W^\prime(x) dx = \frac{1}{\Psi^\prime(0^+)}.$$
When $\Psi^\prime(0^+)=0$, the cancelation will follow from the observation that 
$$  \F\left(\frac{\sigma^2}{2} u^2-R- \Delta\right)(0) =\int_{-\infty}^{+\infty}  \left(\frac{\sigma^2}{2} u^2(z)-R(z)- \Delta(z)\right) dz=0.$$
Indeed, applying the dominated convergence theorem in (\ref{eq:u'})  thanks to Lemma \ref{lem:apriori+}, we have 
\begin{align*}
\left| \F\left(\frac{\sigma^2}{2} u^2-R- \Delta\right)(0)\right|
&=\lim_{\xi\downarrow 0} \left| \Psi(i\xi)  \F\left(u-\Delta\right)(\xi)\right|\\
& \leq \lim_{\xi\downarrow 0} |\Psi(i\xi)|\left(\frac{2}{\xi}  + \int_\R |\Delta(x) |dx \right)  =0 . 
\end{align*}
As a consequence, integrating by parts the last term of  (\ref{eq:FF}), we obtain :
$$\frac{w_\infty}{i\xi}\F\left(\frac{\sigma^2}{2} u^2-R- \Delta\right)(\xi)  =  w_\infty  \int_\R e^{i\xi x} \int_{x}^{+\infty}\left( \frac{\sigma^2}{2} u^2(z)-R(z)- \Delta(z)\right) dz dx.$$
 Similarly, using a change of variable, the left-hand side of  (\ref{eq:FF}) equals 
 $$\int_{-\infty}^{+\infty} e^{i \xi x} \int_x^{+\infty} \left(\frac{\sigma^2}{2} u^2(y)-R(y)- \Delta(y)\right)(W^\prime(y-x)-w_\infty) dy\, dx $$
i.e., Equation (\ref{eq:FF}) reduces to 
$$\int_{-\infty}^{+\infty} e^{i \xi x} \int_x^{+\infty} \left(\frac{\sigma^2}{2} u^2(y)-R(y)- \Delta(y)\right)W^\prime(y-x)dy\, dx 
 =i\xi\, \F(u-\Delta)(\xi) .
$$
It now remains to integrate by parts the left-hand side, and then inverse the Fourier transform to obtain :
\begin{equation}\label{eq:avant}
 \int_x^{+\infty}  \int_r^{+\infty}  \left(\frac{\sigma^2}{2} u^2(z)-R(z)- \Delta(z)\right)W^\prime(z-r)dz  dr = u(x)-\Delta(x).
\end{equation}
Finally, we deduce from Remark \ref{rem:subcrit} and the bounds on $R$ and $\Delta$ that for $x$ large enough, there exists a constant $K>0$ such that 
\begin{equation}\label{eq:eq>0}
\frac{\sigma^2}{2} u^2(x)-R(x)-\Delta(x) \geq \frac{\sigma^2}{2} u^2(x)\left( 1 -  K u(x)- K e^{-(\Phi(1) - \delta) x}\right)\geq 0 
\end{equation}
which proves that the integrand on the right-hand side of (\ref{eq:avant}) is positive for $x$ large enough. As a consequence, applying the Fubini-Tonelli theorem, we finally obtain 
\begin{align*}
u(x)-\Delta(x) & = \int_x^{+\infty}  \left(\frac{\sigma^2}{2} u^2(z)-R(z)- \Delta(z)\right) W(z-x) dz\\
& = \int_0^{+\infty} \left(\frac{\sigma^2}{2} u^2(z+x)-R(z+x)- \Delta(z+x)\right) W(z) dz
\end{align*}
which is the announced equation.\qed
%
%. To do so, observe that by the Fubini's theorem, since $W(0)=0$, 
%\begin{align*}
%  \int_x^{+\infty}   \int_r^{+\infty} \left|\frac{\sigma^2}{2} u^2(y)-R(y)- R(y)\right|W^\prime(y-r)dy\, dr &=
%  \int_x^{+\infty}   \left|\frac{\sigma^2}{2} u^2(y)-R(y)- R(y)\right| W(y-x)dy 
%  \end{align*}
%
%
%As a consequence, integrating by parts the last term of  (\ref{eq:FF}) and inverting the Fourier transform, we deduce that  
%\begin{multline*}\label{eq:R-u'}
% \int_0^{+\infty} \left(\frac{\sigma^2}{2} u^2(z+x)-R(z+x)- R(z+x)\right)(W^\prime(z) -w_\infty)dz \\
% = (R-u)^\prime(x) + w_\infty \int_{-\infty}^x  \left(\frac{\sigma^2}{2} u^2(z)-R(z)- R(z)\right) dz
% \end{multline*}
% i.e.
% \begin{equation}\label{eq:R-u'}
%  \int_0^{+\infty} \left(\frac{\sigma^2}{2} u^2(z+x)-R(z+x)- R(z+x)\right)W^\prime(z)dz 
% = (R-u)^\prime(x). 
% \end{equation}
%Finally, integrating both sides and using again the Fubini theorem :
%\begin{align*}
%u(x)-\Delta(x) &= \int_x^{+\infty}  \int_r^{+\infty}  \left(\frac{\sigma^2}{2} u^2(z)-R(z)- R(z)\right)W^\prime(z-r)dz  dr\\
%& = \int_x^{+\infty}  \left(\frac{\sigma^2}{2} u^2(z)-R(z)- R(z)\right) W(z-x) dz\\
%& = \int_0^{+\infty} \left(\frac{\sigma^2}{2} u^2(z+x)-R(z+x)- R(z+x)\right) W(z) dz
%\end{align*}
%which is the announced equation.\qed
%
%

\subsection{Study of the limit}

The last part of the proof now consists in studying the asymptotics of $u$, using the new equation :
\begin{equation}\label{eq:ux1z}
u(x)-\Delta(x)= x  \int_0^{+\infty} \left(\frac{\sigma^2}{2} u^2(x(z+1))-R(x(z+1))-\Delta(x(z+1))\right) W(xz) dz .
\end{equation}
%Notice that by a coupling argument with the subcritical case obtained in Theorem \ref{theo:subcrit}, we have for any offspring distribution such that $\E[\boldsymbol{p}]<1$ :
%$$
%\liminf_{x\rightarrow +\infty} e^{\Phi(1-\E[\boldsymbol{p}]) x}u(x) >0.$$
%In particular, since $\Phi(0)=0$ in the case $\Psi^\prime(0^+)\geq0$, this implies that for any $\delta>0$, 
%\begin{equation}\label{eq:edu}
%\liminf_{x\rightarrow +\infty} e^{\delta x}u(x) =+\infty.
%\end{equation}
%As a consequence, we deduce that for $x$ large enough, there exists a constant $C>0$ such that 
%$$\frac{\sigma^2}{2} u^2(x)-R(x)-\Delta(x) \geq \frac{\sigma^2}{2} u^2(x)\left( 1 -  C u(x)- C e^{-(\Phi(1) - \delta) x}\right)\geq 0 $$
%which proves that the integrand on the right-hand side of (\ref{eq:ux1z}) is positive for $x$ large enough. 
%Similarly, we deduce from (\ref{eq:R-u'}) that the function $u-R$ is decreasing for $x$ large enough.\\
Going back to Theorem \ref{theo:crit},  Point (2) is equivalent to showing that 
\begin{equation}\label{eq:infsup}
0< \liminf_{x\rightarrow +\infty} \gamma(x) \leq  \limsup_{x\rightarrow +\infty}\gamma(x)  <+\infty
\end{equation}
where we have set, to simplify the notation :
$$\gamma(x) = xW(x)u(x).$$

\subsubsection{Computation of the upper bound}
We first prove that the limit superior of $\gamma$ is finite.
As in Equation (\ref{eq:eq>0}), let us take $A$ large enough such that for any $x\geq A$, the quantity $\frac{\sigma^2}{2}u^2(x)-R(x)-\Delta(x)$ is positive. We then decompose :
\begin{align*}
 u(x)&\geq  x\int_1^{2}  W(xz) \left( \frac{\sigma^2}{2}u^2(x(z+1)) -R(x(z+1))-\Delta(x(z+1))\right) dz  \\
  &\geq  \frac{\sigma^2}{2} xW(x) u^2(3x) -  xW(x)\E[\boldsymbol{p}^3]u^3(2x)  - xW(x) K e^{-2\Phi(1)x} 
\end{align*}
for some constant $K>0$ given by the bound (\ref{eq:phiR}) on $\Delta$.
Then, multiplying both sides by $xW(x)$ and taking the supremum on $[A,n]$, we deduce that 
$$
\sup_{[A, n]}   \frac{\sigma^2}{18} \left(\frac{W(x)}{W(3x)}\right)^2 \left(\gamma(3x)\right)^2 
\leq \sup_{[A, n]} \gamma(x) +\frac{\E[\boldsymbol{p}^3] u(2A)}{4}  \sup_{[A, n]} \left(\frac{W(x)}{W(2x)}\right)^2  \left(\gamma(2x)\right)^2 + C
$$
for some constant $C>0$ independent of $n$. Furthermore, since $W$ is positive,  increasing and concave, we have the bounds 
$$1 \geq \frac{W(x)}{ W(2x)}\geq  \frac{W(x)}{W(3x)} \geq   \frac{1}{W(3x)} \left(\int_0^{x}W^\prime(3y) dy +W(0)\right) = \frac{1}{3}+ \frac{2}{3}\frac{W(0)}{W(3x)}\geq \frac{1}{3}.$$
Therefore
$$ \frac{\sigma^2}{162} \sup_{[3A, 3n]}     \left(\gamma(x)\right)^2 \leq  \sup_{[A, 3n]} \gamma(x)  + \frac{ \E[\boldsymbol{p}^3]}{4} u(2A)  \sup_{[A, 3n]} \left(\gamma(x)\right)^2 + C 
$$
and dividing both sides 
$$
\left( \frac{\sigma^2}{162}-  \frac{ \E[\boldsymbol{p}^3]}{4} u(2A) \right)  \sup_{[3A, 3n]} \gamma(x) \leq  \left(1+\frac{\sup_{[A, 3A]} \gamma(x) +C}{ \sup_{[3A, 3n]}     \gamma(x)} \right)   + \frac{ \E[\boldsymbol{p}^3]}{4} u(2A)  \frac{\sup_{[A, 3A]} \left(\gamma(x)\right)^2}{ \sup_{[3A, 3n]}    \gamma(x)}. 
$$
Finally, by taking $A$ large enough for the left-hand side to be positive and letting $n\rightarrow +\infty$, we conclude that 
$$\sup_{x\geq 3A} \gamma(x) <+\infty$$
which implies that the limit superior is finite. \\

\subsubsection{Computation of the lower bound}
We now turn our attention to the limit inferior of $\gamma$.
Let us fix $\delta>0$ small enough and start by writing the decomposition 
$$u^2(z)\leq u^{1-\delta}(z) (u(z) - \Delta(z))^{1+\delta} \left(1+ \frac{\Delta(z)}{u(z)-\Delta(z)}\right)^{1+\delta}.$$
Using the limit superior as well as (\ref{eq:edu}), we deduce that there exists $K>0$ and $A>0$ such that for every $z\geq A$, one has 
$$\frac{\sigma^2}{2}u^2(z)- \Delta(z) \leq K \left(\frac{1}{zW(z)}\right)^{1-\delta}(u(z)-\Delta(z))^{1+\delta} .$$ 
Furthermore, looking at Formula (\ref{eq:avant}), we see that the function $u-\Delta$ is differentiable, and decreasing for $x$ large enough since $W^\prime$ is positive. This allows to obtain the bound :
$$
\left(\Delta(x) - u(x)\right)^\prime \leq  K   (u(x)-\Delta(x))^{1+\delta}   \int_0^{+\infty} \frac{xW^\prime(xz)  }{\left(x(1+z)W(x(1+z)\right)^{1-\delta}}dz 
$$
which implies, since $x\rightarrow xW(x)$ is increasing, 
\begin{equation}\label{eq:u'/u}
\frac{\left(\Delta(x) - u(x)\right)^\prime }{(u(x)-\Delta(x))^{1+\delta} }  \leq  K \left(\frac{W(x)}{(xW(x))^{1-\delta}} +   x\int_1^{+\infty} \frac{W^\prime(xz)  }{\left(x(1+z)W(x(1+z)\right)^{1-\delta}}dz \right). 
\end{equation}
Furthermore, since $W$ is concave and $z\geq 1$, we have the series of inequalities :
$$
\frac{W^\prime(xz) }{\left(x(1+z)W(x(1+z))\right)^{1-\delta}} \leq \frac{W(xz)}{xz}   \frac{1}{\left(xzW(xz)\right)^{1-\delta}}
= \frac{\left(W(xz)\right)^\delta}{(xz)^{2-\delta}} \leq   \left(\frac{W(x)}{x}\right)^{\delta}\frac{1}{(xz)^{2-2\delta}}.$$
As a consequence, we deduce that the integral in (\ref{eq:u'/u}) is bounded by 
$$ x\int_1^{+\infty} \frac{W^\prime(xz) }{\left(x(1+z)W(x(1+z))\right)^{1-\delta}} dz \leq  \left(\frac{W(x)}{x}\right)^{\delta} \int_x^{+\infty} \frac{1}{z^{2-2\delta}}dz  =   \frac{1}{1-2\delta} \frac{(W(x))^\delta}{x^{1-\delta}}.
$$
Then, integrating (\ref{eq:u'/u}) on $[A, y]$, the last inequality implies that there exists a constant $C>0$ such that  :
$$
\frac{1}{(u(y)-\Delta(y))^\delta} -\frac{1}{(u(A)-\Delta(A))^\delta} \\
\leq 
\delta C  \int_A^y \frac{(W(x))^\delta}{x^{1-\delta}}dx \leq  C(yW(y))^{\delta}
$$
i.e.
$$
u(y)-\Delta(y) \geq \left( C(yW(y))^{\delta}   + \frac{1}{(u(A)-\Delta(A))^\delta}   \right)^{-1/\delta}.
$$
Finally, passing to the limit inferior, we conclude that 
$$\liminf_{y\rightarrow +\infty} \gamma(y)= \liminf_{y\rightarrow +\infty} yW(y) u(y) \geq C^{-1/\delta}>0$$
which proves Point (2) of Theorem \ref{theo:crit}.\\

\subsection{The case $\Psi^\prime(0^+)>0$.}
We now prove Point (1) of Theorem \ref{theo:crit}.
In this case, we may improve the previous inequality using the fact that $W$ converges towards some positive value $W(\infty) = 1/\Psi^\prime(0^+)$. Indeed, let us first write Equation (\ref{eq:ux1z}) under the form :
\begin{equation}\label{eq:alpha1}
u(x)= \frac{\sigma^2}{2}   \int_0^{+\infty}  u^2(x+z) W(x+z)dz  + I(x) = \frac{\sigma^2}{2}   \int_x^{+\infty}  u^2(z) W(z)dz  + I(x)
\end{equation}
where the remainder $I$ is given by :
$$I(x) =  \Delta(x) + \frac{\sigma^2}{2}   \int_0^{+\infty}  u^2(x+z)(W(z)-W(z+x)) dz  -  \frac{\sigma^2}{2}   \int_0^{+\infty} (R(z+x) + \Delta(z+x)) W(z) dz. $$
Differentiating and solving Equation (\ref{eq:alpha1}), we deduce that 
\begin{align}
\notag u(x) &= \frac{1}{1+ \frac{\sigma^2}{2}   \int_0^x W(z)dz  + \int_0^x \frac{I^\prime(z)}{u^2(z)}dz }\\
\label{eq:u=I}&= \frac{1}{1+ \frac{\sigma^2}{2}   \int_0^x W(z)dz  +  \frac{I(x)}{u^2(x)} - I(0) -2 \int_0^x \frac{I(z)u^\prime(z)}{u^3(z)}dz }.
\end{align}
Now, using (\ref{eq:infsup}) as well as the bounds on $R$ and $\Delta$, the function $I$ is smaller than 
$$
|I(x)| \leq K_1\left(\frac{1}{x^2} + \frac{1}{x} \int_0^{+\infty} \frac{1}{(1+z)^2} (W(x+zx)-W(zx)) dz\right) = K_1\left(\frac{1}{x^2} + \frac{\rho(x)}{x}\right)$$
for some constant $K_1>0$ and where $\rho$ is a positive function converging towards 0.
Next, for $x\geq 1$, since from the first part of the proof the function $x\rightarrow xu(x)$ is bounded away from 0, we obtain the bound 
$$ \int_1^x \left|\frac{I(z)u^\prime(z)}{u^3(z)}\right| dz \leq  K_2 \left(1+   |\ln(u(x))| + \int_1^x  \frac{\rho(z)}{u^2(z)} |u^\prime(z)| dz\right)
$$
for some constant $K_2>0$. To deal with the last integral, fix $\varepsilon>0$. There exists $A>0$ such that for $x\geq A$,  $\rho(x)<\varepsilon$. This allows to obtain the decomposition 
$$ \int_1^x  \frac{\rho(z)}{u^2(z)} |u^\prime(z)| dz \leq   \int_1^A  \frac{\rho(z)}{u^2(z)} |u^\prime(z)| dz + \varepsilon \left(\frac{1}{u(x)}-\frac{1}{u(A)} \right).$$
%
%$$\left|\frac{I(x)u^\prime(x)}{u^3(x)}\right| \leq  K_2 |u^\prime(x)| \left(\frac{1}{u(x)}   + \frac{2W(\infty)\varepsilon}{u^2(x)} + \frac{W(\infty)- W(\varepsilon x)}{u^{2}(x)} \right)$$
%from which we deduce that
%$$ \int_1^x \left|\frac{I(z)u^\prime(z)}{u^3(z)}\right| dz \leq  K_3 \left(1+   |\ln(u(x))| + \frac{2W(\infty)\varepsilon}{u(x)}  + \int_1^x  \frac{W(\infty)- W(\varepsilon z)}{u^2(z)} |u^\prime(z)| dz\right)
%$$
%with $K_2, K_3>0$. Finally, take $A_\varepsilon$ large enough such that $W(\infty)- W(\varepsilon A_\varepsilon)<\varepsilon$. Since $W$ is increasing, we have for $x>A_\varepsilon$ 
%$$ \int_1^x  \frac{W(\infty)- W(\varepsilon z)}{u^2(z)} |u^\prime(z)| dz \leq  \int_1^{A_\varepsilon}  \frac{W(\infty)- W(\varepsilon z)}{u^2(z)} |u^\prime(z)| dz + \varepsilon \left(\frac{1}{u(x)}-\frac{1}{u(A_\varepsilon)} \right).$$
As a consequence, multiplying Equation  (\ref{eq:u=I}) by $x$ and passing to the limits as $x\rightarrow +\infty$, we deduce that 
$$\frac{1}{ \frac{\sigma^2}{2} W(\infty) + C \varepsilon  }    \leq \liminf_{x\rightarrow +\infty} xu(x) \leq\limsup_{x\rightarrow +\infty} xu(x)  \leq \frac{1}{ \frac{\sigma^2}{2} W(\infty) - C \varepsilon}$$ 
%Furthermore, taking $\delta =0$ in  (\ref{eq:u'/u}), we also deduce that  $x\rightarrow x^2|u^\prime(x)|$ is bounded for $x$ large enough. As a consequence, for $x\geq 1$, there exists a constant $C>0$ such that 
%$$\left|\int_1^x \frac{I(z)u^\prime(z)}{u^3(z)}dz \right| \leq  C \int_1^x |I(z)| z dz \leq CK\left( \ln(x) + 2  W(\infty) \varepsilon x + \int_1^x (W(\infty)- W(\varepsilon z)) dz\right). $$ 
%Going back to (\ref{eq:u=I}), this yields the bounds 
%$$\frac{1}{ \frac{\sigma^2}{2} W(\infty) + 4KC W(\infty) \varepsilon }    \leq \liminf_{x\rightarrow +\infty} xu(x) \leq\limsup_{x\rightarrow +\infty} xu(x)  \leq \frac{1}{ \frac{\sigma^2}{2} W(\infty) - 4KC W(\infty) \varepsilon }$$ 
for some constant $C>0$, and the result follows by letting $\varepsilon\downarrow 0$ :
$$\lim_{x\rightarrow +\infty} xu(x) = \frac{1}{\frac{\sigma^2}{2} W(\infty)} = \frac{2}{\sigma^2} \Psi^\prime(0^+).$$\qed

\subsection{The regularly varying case}
We now assume that Assumption (\ref{eq:condPsi}) holds :
$$
\Psi(\lambda) \equi_{\lambda \downarrow 0} \lambda^{\alpha} \ell\left(\frac{1}{\lambda}\right)
$$
where $\alpha\in(1,2]$ and $\ell$ is a slowly varying function.
Recalling that $W$ is increasing, we deduce from (\ref{eq:W}) and Karamata's Tauberian theorem that $W$ is regularly varying at $+\infty $ :
\begin{equation}\label{eq:WW'}
W(x) \equi_{x\rightarrow +\infty}  \frac{x^{\alpha-1}}{\Gamma(\alpha)\ell\left(x\right)}.
\end{equation}
Let us define for $z\geq 0$ the function $b^{(x)}$ by 
$$b^{(x)}(z) =\frac{W(x)W(xz)}{ \left((1+z) W(x+xz)\right)^2}. $$
From the asymptotics (\ref{eq:WW'}), this function converges a.s. : 
$$b^{(x)}(z) \xrightarrow[x\rightarrow +\infty]{}  \frac{z^{\alpha-1}}{(1+z)^{2\alpha}}. $$
We now come back to Equation (\ref{eq:ux1z}) which we rewrite :
\begin{equation}\label{eq:gammaI}
\gamma(x)=    \frac{\sigma^2}{2}   \int_0^{+\infty}\gamma^2(x(1+z)) b^{(x)}(z)dz+  I(x)
\end{equation}
where the remainder $I(x)$ is given by
$$
I(x) = xW(x)\Delta(x)-\int_0^{+\infty} \big(x(1+z) W(x(1+z))\big)^2\big(R(x(1+z)) + \Delta(x(1+z))\big) b^{(x)}(z) dz.
$$
Since $W$ is increasing, the inequality $b^{(x)}(z) \leq 1/(1+z)^2$ together with the bounds on $R$ and $\Delta$ given in Lemma \ref{lem:equ} allow to apply the  dominated convergence theorem to obtain 
$$\lim_{x\rightarrow +\infty } I(x) =0.$$
Now, applying Fatou's lemma in Equation (\ref{eq:gammaI}), we deduce that 
$$\liminf_{x\rightarrow +\infty} \gamma(x) \geq \liminf_{x\rightarrow +\infty} \gamma^2(x)  \frac{\sigma^2}{2} \int_0^{+\infty} \frac{z^{\alpha-1}}{(1+z)^{2\alpha}}dz $$
i.e.
$$\liminf_{x\rightarrow +\infty} \gamma(x) \leq  \frac{2}{\sigma^2B(\alpha,\alpha)} $$
where $B$ denotes the Beta function. Similarly, applying the reverse Fatou lemma since $\gamma$ is bounded, we conclude that 
$$ \frac{2}{\sigma^2B(\alpha,\alpha)} \leq \limsup_{x\rightarrow +\infty} \gamma(x) . $$
Finally, the existence of a sequence $(x_n)$ such that 
$$\lim_{n\rightarrow +\infty} \gamma(x_n) = \lim_{x\rightarrow +\infty} x_nW(x_n) u(x_n) =  \frac{2}{\sigma^2B(\alpha,\alpha)}$$
is a consequence of the continuity of $\gamma$. Now, Corollary \ref{cor:1} follows from the asymptotics of $W$ given by (\ref{eq:WW'}) and the formula for the Beta function $\displaystyle B(\alpha,\alpha)=\Gamma^2(\alpha)/\Gamma(2\alpha)$.\qed \\

\begin{remark}
Notice that if we neglect the remainders, the equation 
$$f(x)=    \frac{\sigma^2}{2}   \int_0^{+\infty}f^2(x(1+z)) \frac{z^{\alpha-1}}{(1+z)^{2\alpha}}  dz$$
admits as solutions the functions 
$$f_c(x) = \frac{x^\alpha}{\left(c + \left(\frac{\sigma^2}{2}B(\alpha,\alpha)\right)^{1/\alpha}x\right)^\alpha}$$
where $c$ is any parameter in $[0,+\infty]$. This expression is in agreement with the explicit solution obtained in \cite{FlSa} for the Brownian case.
\end{remark}

\section{The critical case : proof of Theorem \ref{theo:crit} when $\Psi^\prime(0^+)<0$}

In this case, the underlying L\'evy process drifts a.s. to $-\infty$, and we shall see that the asymptotics of $\bf{M}$ is no longer polynomial but exponential. Consequently, the proof will be similar to the subcritical case of Theorem \ref{theo:subcrit}. We start with the following a priori estimate :

\begin{lemma}\label{lem:psi-}
Assume that $\Psi^\prime(0^+)<0$. Then, for any $\delta >0$, there exists a finite constant $C_\delta$ such that 
$$\forall x\geq 0,\qquad u(x) \leq C_\delta \exp\left(-\frac{\Phi(0)-\delta}{2} x\right).$$

\end{lemma}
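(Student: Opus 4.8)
The plan is to mimic the argument used for Lemma~\ref{lem:p<1} in the subcritical case, replacing the role of the level $1-\E[\boldsymbol{p}]$ by the level $0$ and the integrability of $e^{\delta S_\e}$ by a finiteness property that now comes from the drift condition $\Psi^\prime(0^+)<0$. First I would start from the inequality (\ref{eq:startbound}), which in the critical case $\E[\boldsymbol{p}]=1$ reads, for $x\geq0$,
\begin{multline*}
\frac{\sigma^2}{2}\E\left[1_{\{L_\e>0\}}\right]u^2(x)\leq \E\left[1_{\{S_\e<x\}}u(x-S_\e)\right]-u(x)\\
+\E[\boldsymbol{p}^3]\E\left[1_{\{S_\e<x\}}u^3(x-S_\e)\right]+\left(1+\E[\boldsymbol{p}^2]\right)\Pb(S_\e\geq x),
\end{multline*}
where I used $L_\e\leq S_\e$ a.s.\ and the monotonicity of $u$. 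Multiplying by $e^{\delta x}$ with $\delta>0$ to be chosen, integrating over $[0,n]$, and performing the change of variables $x\mapsto x-S_\e$ inside the first expectation exactly as in the proof of Lemma~\ref{lem:p<1}, I obtain
$$
\int_0^n e^{\delta x}\left(\frac{\sigma^2}{2}\Pb(L_\e>0)u^2(x)-\E[\boldsymbol{p}^3]\E\left[e^{\delta S_\e}1_{\{S_\e\leq n\}}\right]u^3(x)\right)dx\leq\left(\E[e^{\delta S_\e}]-1\right)\int_0^n e^{\delta x}u(x)dx+C_\delta,
$$
with $C_\delta=(1+\E[\boldsymbol{p}^2])\int_0^{+\infty}e^{\delta x}\Pb(S_\e\geq x)dx$.

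The key point, and the place where $\Psi^\prime(0^+)<0$ enters, is that the running supremum $S_\e$ still has exponentially decaying tails: by (\ref{eq:S}) we have $\Pb(S_\e\geq x)=e^{-\Phi(0)x}$ — note that when $\Psi^\prime(0^+)<0$ the equation $\Psi(\lambda)=0$ has the positive root $\Phi(0)>0$. Hence $\E[e^{\delta S_\e}]<+\infty$ for every $\delta<\Phi(0)$, and moreover $\E[e^{\delta S_\e}]\downarrow 1$ as $\delta\downarrow0$, so I may fix $\delta\in(0,\Phi(0))$ small enough that $\E[e^{\delta S_\e}]-1<0$ and also $\delta<\Phi(1)$ so that $C_\delta<+\infty$. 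Letting $n\to+\infty$ and using that the negative term $-( \E[e^{\delta S_\e}]-1)\int_0^ne^{\delta x}u(x)dx$ can be dropped, together with the fact that $u$ decreases to $0$ (so the cubic term is eventually dominated by the quadratic one), I conclude $\int_0^{+\infty}e^{\delta x}u^2(x)dx<+\infty$. Exactly as in (\ref{eq:minu2}), comparing this integral with $\int_{x-1}^x e^{\delta z}u^2(z)dz\geq e^{\delta(x-1)}u^2(x)$ gives $u(x)\leq Ce^{-(\delta/2)x}$ for $x\geq1$, hence an exponential bound with rate $\delta/2$ for any admissible $\delta$.

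This argument, however, only yields a bound with rate $\delta/2$ for $\delta$ strictly below $\min(\Phi(0),\Phi(1))$, whereas the statement asks for the rate $(\Phi(0)-\delta)/2$, i.e.\ essentially $\Phi(0)/2$. So after the first pass I would bootstrap: having $u(x)\leq Ce^{-\theta x}$ for some $\theta>0$, I can take the Laplace transform of the critical version of (\ref{eq:sansR}) on the strip $\{\Re(\lambda)<\theta\}$ and feed back the improved decay. The cleanest route is to argue that the $n$-dependent factor $\E[\boldsymbol{p}^3]\E[e^{\delta S_\e}1_{\{S_\e\leq n\}}]u^3(x)$ is, for $x$ large, negligible against $\tfrac{\sigma^2}{2}\Pb(L_\e>0)u^2(x)$ uniformly, so that the only genuine obstruction to pushing $\delta$ up is the requirement $\E[e^{\delta S_\e}]<\infty$, i.e.\ $\delta<\Phi(0)$; since we also need $\delta<\Phi(1)$ and $\Phi(1)>\Phi(0)$ (because $\Psi$ is increasing past its minimum and $\Psi(\Phi(0))=0<1=\Psi(\Phi(1))$), the binding constraint is indeed $\delta<\Phi(0)$. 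Thus for every $\delta\in(0,\Phi(0))$ we get $\int_0^{+\infty}e^{(\Phi(0)-\delta)x}u^2(x)dx<+\infty$, and the window estimate (\ref{eq:minu2}) upgrades this to $u(x)\leq C_\delta\exp\!\big(-\tfrac{\Phi(0)-\delta}{2}x\big)$, which is the claim.

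The main obstacle I anticipate is the careful handling of the truncation at level $n$: the term $\E[e^{\delta S_\e}1_{\{S_\e\leq n\}}]$ multiplying the cubic contribution must be controlled uniformly in $n$ (it is bounded by $\E[e^{\delta S_\e}]<\infty$ once $\delta<\Phi(0)$), and one must make sure that the negative sign of $\E[e^{\delta S_\e}]-1$ is genuinely available, which forces the two-sided choice $\delta$ small but positive and uses the continuity of $\delta\mapsto\E[e^{\delta S_\e}]=\Phi(0)/(\Phi(0)-\delta)$ at $\delta=0$. Everything else — the change of variables, the comparison $L_\e\leq S_\e$, and the final window estimate — is a routine transcription of the proof of Lemma~\ref{lem:p<1}.
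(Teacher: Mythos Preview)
There is a genuine gap that makes the argument collapse at the very first step. You claim that for $\delta>0$ small enough one has $\E[e^{\delta S_\e}]-1<0$, and later you even write $\E[e^{\delta S_\e}]=\Phi(0)/(\Phi(0)-\delta)$. Both statements are wrong. First, by (\ref{eq:S}) the random variable $S_\e$ is exponential with parameter $\Phi(1)$, not $\Phi(0)$. More importantly, $S_\e\geq0$ a.s.\ and $\Pb(S_\e>0)>0$, so $\E[e^{\delta S_\e}]>1$ for \emph{every} $\delta>0$; the quantity $\E[e^{\delta S_\e}]-1$ is strictly positive and cannot be made negative by shrinking $\delta$. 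Consequently the term $(\E[e^{\delta S_\e}]-1)\int_0^n e^{\delta x}u(x)\,dx$ on the right-hand side is positive and potentially divergent as $n\to\infty$; it cannot be dropped, and the inequality gives no control on $\int e^{\delta x}u^2(x)\,dx$. This is precisely the obstruction that distinguishes the critical case from the subcritical one: in Lemma~\ref{lem:p<1} the factor $\E[\boldsymbol{p}]<1$ in front of $\E[e^{\delta S_\e}]$ is what allowed the product to dip below $1$, and that factor is now equal to $1$.

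The paper circumvents this by abandoning the bound (\ref{eq:startbound}) in terms of $S_\e$ and working instead with Equation~(\ref{eq:u}), which is written in terms of $L_\e$. The key observation is that when $\Psi^\prime(0^+)<0$ and $\lambda\in(0,\Phi(0))$ one has $\Psi(\lambda)<0$, hence $\E[e^{\lambda L_\e}]=1/(1-\Psi(\lambda))<1$; this is where the drift condition genuinely enters. Because $u$ is not a priori known to have exponential decay, the Laplace integrals may diverge, so the paper introduces a damping weight $\exp(\lambda x-\tfrac{1}{n}e^{\lambda x})$, integrates (\ref{eq:u}) against it, and uses integration by parts together with Jensen's inequality (applied to the convex function $t\mapsto e^{-t/n}$) to show that the linear-in-$u$ contribution is nonpositive. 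Letting $n\to\infty$ then yields $\int_0^\infty e^{\lambda x}u^2(x)\,dx<\infty$ for every $\lambda\in(0,\Phi(0))$, after which your window estimate (\ref{eq:minu2}) finishes the job. The moral is that the replacement of $L_\e$ by $S_\e$ in (\ref{eq:startbound}), harmless in the subcritical proof, destroys exactly the sign needed here.
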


\begin{proof}
Fix $\lambda\in(0, \Phi(0))$ and notice that since $\Psi$ is convex and  $\Psi^\prime(0^+)<0$, we have $\E\left[e^{\lambda L_\e}\right]\leq 1$.
Let $n\in \N$. Integrating Equation (\ref{eq:u}) against $\exp\left(\lambda x - \frac{1}{n}e^{\lambda x}\right)$, we obtain :
\begin{align*}
 &\int_0^{+\infty}\E\left[e^{\lambda (x+L_\e)}  \exp\left(- \frac{1}{n}e^{\lambda (x+L_\e)}\right)\right]\left(\frac{\sigma^2}{2}u^2(x) - \E[\boldsymbol{p}^3]u^3(x) \right) dx -  \int_0^{+\infty} e^{\lambda x}  \exp\left(- \frac{1}{n}e^{\lambda x}\right) \Delta(x) dx\\
&\qquad \leq \int_0^{+\infty}\E\left[e^{\lambda (x+L_\e)}  \exp\left(- \frac{1}{n}e^{\lambda (x+L_\e)}\right)\right]u(x)dx - \int_0^{+\infty} e^{\lambda x}  \exp\left(- \frac{1}{n}e^{\lambda x}\right) u(x)dx. 
\end{align*}
Integrating by parts and recalling that $u$ is decreasing, the right-hand side is further equal to 
$$\frac{n}{\lambda}\E\left[e^{-\frac{1}{n}e^{\lambda L_\e}} - 1\right] + \int_0^{+\infty} \E\left[e^{-\frac{1}{n}e^{\lambda x}} -e^{-\frac{1}{n}e^{\lambda x + \lambda L_\e}}  \right] |u^\prime(x)| dx.
$$
Observe finally that this last integral is negative, since, from Jensen inequality,
$$
\E\left[e^{-\frac{1}{n}e^{\lambda x}} -e^{-\frac{1}{n}e^{\lambda x + \lambda L_\e}}  \right] \leq e^{-\frac{1}{n}e^{\lambda x}} -e^{-\frac{1}{n}e^{\lambda x} \E\left[e^{ \lambda L_\e}\right]}\leq 0.
$$
As a consequence, we deduce that 
$$\limsup_{n\rightarrow +\infty} \int_0^{+\infty}\E\left[e^{\lambda (x+L_\e)}  \exp\left(- \frac{1}{n}e^{\lambda (x+L_\e)}\right)\right]\left(\frac{\sigma^2}{2}u^2(x) - \E[\boldsymbol{p}^3]u^3(x) \right) dx \leq   \int_0^{+\infty} e^{\lambda x} \Delta(x) dx.$$
Applying the monotone convergence theorem, this implies that
$$\int_0^{+\infty}    e^{\lambda x } u^2(x) dx<+\infty.$$
Lemma \ref{lem:psi-} now follows from the fact that $u$ is decreasing, using the same argument as in (\ref{eq:minu2}).
\end{proof}

The remainder of the proof is now similar to the subcritical case of Section \ref{sec:2}. Taking the Laplace transform of (\ref{eq:u}) with $\lambda \in (0, \Phi(0)/2)$ , we obtain :
\begin{equation}\label{eq:-1/psi}
 -\frac{1}{\Psi(\lambda)} \int_\R e^{\lambda x}  \left(\frac{\sigma^2}{2} u^2(x) - R(x)- \Delta(x)\right)dx  = \int_\R e^{\lambda x}  \left(u(x)- \Delta(x)\right)dx .
 \end{equation}
 We now observe that since $\Psi(\lambda)<0$, the function $\lambda\longrightarrow -1/\Psi(\lambda)$ is the Laplace transform of the $0$-potential of $L$ :
$$-\frac{1}{\Psi(\lambda)} = \int_0^{+\infty} e^{t\Psi(\lambda)}dt = \int_0^{+\infty} \E\left[e^{\lambda L_t}\right] dt= \int_\R e^{\lambda z} \int_0^{+\infty} \Pb(L_t\in dz)  dt. $$
Therefore,  passing to the limit as $q\downarrow 0$ in (\ref{eq:Kyp}) 
$$
\int_0^{+\infty} \Pb(L_t\in dz)dt  = \left( \Phi^\prime(0)e^{-\Phi(0)z} - W(-z)\right) dz
$$
and plugging this last relation in (\ref{eq:-1/psi}), we obtain after inverting the Laplace transforms
$$\int_\R \left(\frac{\sigma^2}{2}u^2(z)- R(z)- \Delta(z)\right)  \left( \Phi^\prime(0)e^{-\Phi(0)(x-z)} - W(z-x)\right) dz= u(x)-\Delta(x).$$
Finally, passing to the limit as $x \rightarrow +\infty$, we conclude that
$$\lim_{x\rightarrow +\infty} e^{\Phi(0) x} u(x) =  \Phi^\prime(0)  \int_\R e^{\Phi(0) z}\left(\frac{\sigma^2}{2}u^2(z) - R(z)- \Delta(z)\right) dz=\kappa. $$
\qed

\addcontentsline{toc}{section}{References}

\end{document}